\newtheorem{lem}{Lemma}
\newtheorem{ass}{Assumption}
\newtheorem{thm}{Theorem}
\newtheorem{rem}{Remark}
\def\mb{\mathbf}
\def\mc{\mathcal}
\DeclareMathOperator*{\argmin}{argmin}
\begin{document}
\title{ \bf Discretized Distributed Optimization over Dynamic Digraphs}
\author{Mohammadreza  Doostmohammadian,  Wei Jiang, Muwahida Liaquat, Alireza Aghasi,  Houman~Zarrabi
\thanks{M. Doostmohammadian is with the Faculty of Mechanical Engineering at Semnan University, Semnan, Iran (doost@semnan.ac.ir).  W. Jiang and M. Liaquat are with the School of Electrical Engineering at Aalto University, Espoo, Finland (firstname.lastname@aalto.fi). M. Liaquat is also with Finnish Geospatial Research Institute (FGI). A. Aghasi is with the Department of Electrical Engineering and Computer Science, Oregon State University, USA (alireza.aghasi@oregonstate.edu). H. Zarrabi is with Iran Telecom Research Centre (ITRC), Tehran, Iran (h.zarrabi@itrc.ac.ir).  }
}
\maketitle
\thispagestyle{empty}
\pagestyle{empty}

\begin{abstract}
We consider a discrete-time model of continuous-time distributed optimization over dynamic directed-graphs (digraphs) with applications to distributed learning. Our optimization algorithm works over general strongly connected dynamic networks under switching topologies, e.g., in mobile multi-agent systems and volatile networks due to link failures. Compared to many existing lines of work, there is no need for bi-stochastic weight designs on the links. The existing literature mostly needs the link weights to be stochastic using specific weight-design algorithms needed both at the initialization and at all times when the topology of the network changes. This paper eliminates the need for such algorithms and paves the way for distributed optimization over time-varying digraphs. We derive the bound on the gradient-tracking step-size and discrete time-step for convergence and prove dynamic stability using arguments from consensus algorithms, matrix perturbation theory, and Lyapunov theory. This work, particularly, is an improvement over existing stochastic-weight undirected networks in case of link removal or packet drops. This is because the existing literature may need to rerun time-consuming and computationally complex algorithms for stochastic design, while the proposed strategy works as long as the underlying network is weight-symmetric and balanced.  The proposed optimization framework finds applications to distributed classification and learning.
\end{abstract}
\def\abstractname{Note to Practitioners}
\begin{abstract}
	Inspired by recent advances in cloud-computing and distributed and parallel processing along with embedded low-cost CPUs and wireless communications, this paper considers distributed algorithms for optimization and machine learning over wireless-connected autonomous multi-agent systems (MASs). In contrast to the classical centralized learning methods, which are prone to single-point-of-failure and centralized processing, in cooperative optimization the learning is distributed among a group of data-processing agents (e.g. robots) with communication units. This article provides an efficient algorithm to enable MASs to collaboratively optimize a cost function, e.g., for binary classification and distributed support vector machine (D-SVM). Sampled data systems related to MASs and robotic networks, due to digital communications and discretized control models, use discrete-time algorithms that need to account for intermittent communications and dynamic networking. This requires discretized algorithms over \textit{dynamic} digraphs, practical in the presence of packet drops (or lost communication channels). Most existing distributed algorithms either are susceptible to change in the communication network or impose computationally inefficient stochastic design. These algorithms need to be rerun, for example,  whenever the mobile robotic network changes due to limited communication range. This makes most existing algorithms infeasible in real-time applications. Our proposed method outperforms similar algorithms over dynamic robotic networks and in the presence of link removal (packet drops). We show this efficiency of our distributed optimization method by simulation.	
\end{abstract}	
\begin{IEEEkeywords}
	distributed optimization, matrix perturbation theory, consensus constraint, dynamic digraphs
\end{IEEEkeywords}
\section{Introduction} \label{sec_intro}

Distributed optimization finds application in many emerging areas from estimation \cite{tase} and machine learning \cite{dsvm} to distributed scheduling and resource allocation \cite{rikos2021optimal,Kar6345156,liu2022distributed,kaheni2021distributed,doostmohammadian2023distributed}. In many applications, the network topology might be time-varying \cite{doan2017scl,dsvm,khan_AB} or contain uni-directional transmission links and data-exchange \cite{gharesifard2013distributed,dsvm,khan_AB,add-opt,pushpull_nedic}. This motivates us to explore optimization strategies over general dynamic directed graphs (digraphs).

This work extends the existing works in the sense that the link weights are \textit{not necessarily bi-stochastic} or split into row and column stochastic matrices \cite{khan_AB,add-opt,pushpull_nedic}. This relaxes the need for weight-stochastic design algorithms \cite{6426252} for weight-symmetric networks in case of link failure or mutual data packet drops. Recall that, any link failure or change in the network topology fails the stochastic condition, and thus, compensation algorithms \cite{6426252,cons_drop_siam,xu2018consensusability,li2019robust,gerencser2018push} are needed to update the link weights to comply with stochasticity in the existing literature. Eliminating these weight constraints allows for the proposed solution to be easily applied over time-varying digraph topologies and networks subject to packet drops.
To prove the convergence of our continuous-time dynamics, we first determine the admissible range of gradient-tracking step-size based on the perturbation analysis \cite{bhatia2007perturbation}. Proving that all the eigenvalues have negative real values (except one zero eigenvalue for each state dimension), the stability is proved; however, the explicit bound on the convergence rate of the algorithm is left for future work\footnote{Our distributed optimization algorithm involves multiple steps like local updates, consensus, communication scheduling, auxiliary variable updates, and more. The explicit interactions between these steps can be complicated to analyze using perturbation theory. Therefore, due to the complexity of perturbation-based analysis and due to the dynamic nature of the network topology it is hard to explicitly find the bound on the convergence rate. }. Next, by discretizing the dynamics, we bound the discrete-time step-size for stability. As compared to some other existing literature, our algorithm operates in a single time-scale with no need for an extra inner consensus loop (as an additional time-scale) \cite{arxiv_digraph,wei_ecc}, and with no need for stochastic weight design \cite{khan_AB,add-opt,pushpull_nedic}. This follows the fact that our weight-balanced condition is more relaxed than the stochastic condition in general. We give particular examples to show improvement over weight-symmetric undirected unreliable networks subject to link removals. We give illustrative examples and simulations to support this claim.

\textbf{General Notations:} $\partial_a(z) = \frac{dz}{da}$ denotes the derivative with respect to $a$. $\sigma(A)$ represents the eigen-spectrum of the matrix $A$.
$I_n$ and $\mb{0}_{n \times n}$ denote the identity matrix and all-zero matrix of size $n$. $\mb{0}_{n}$ and $\mb{1}_{n}$ denote the column vector of all ones or zeros. $\nabla F$ and  $\nabla^2 F$ denote the gradient and second gradient (Hessian) of function $F$. ${\text{blockdiag}[A_i]}$ implies a block-diagonal matrix with each $A_i$ as its $i$th diagonal block. $\otimes$ denotes the Kronecker product.
LHP and RHP are abbreviations of left-half-plane and right-half-plane. The operator norm of a matrix is defined as \cite{bhatia2007perturbation}: $\lVert A\rVert= \sup_{\lVert \mb{x}\rVert=1} \lVert A \mb{x} \rVert $.

\section{Problem Statement} \label{sec_prob}
\subsection{Preliminaries}
We represent the information exchange between nodes by a graph $\mathcal{G}_q=\{\mathcal{V}, \mathcal{E}_q\}$ of order/size $n$, with (time-invariant) set of nodes $\mathcal{V}$ and $\mathcal{E}_q \subseteq \mathcal{V} \times \mathcal{V}$ as the (time-varying) set of links ($q$ as the switching signal).
A directed graph (digraph) is \emph{strongly connected (SC)} if there exists a directed path between every two nodes.
The diameter $d_g$ of $\mathcal{G}$ is the longest shortest path between any two nodes.
Matrices ${W=\{w_{ij}\}}$, ${A=\{a_{ij}\}}$ represent two (generally different) adjacency matrices of~$ \mc{G}$. The Laplacian matrix~$\overline{W}=\{\overline{w}_{ij}^q\}$,  is defined as $\overline{w}_{ij}^q=w_{ij}$ for $i\neq j$ and $\overline{w}_{ii}^q=-\sum_{j=1}^n w_{ij}$ for $i=j$ (similar definition holds for $\overline{A}$). This implies that $\overline{W} \mb{1}_n = 0$ and $\overline{A} \mb{1}_n = 0$.

The work \cite{jadbabaie2003coordination} equivalently defines the Laplacian as $I_n - \overline{D}^{-1}W$ (known as the multi-rate integrator) with the entries of the diagonal (modified) degree matrix $\overline{D}$ as    $\overline{D}_{i,i} = \sum_{j=1}^n w_{ij}+c$ and $c$ as an additive fixed constant to avoid the singularity.
A network is weight-balanced if $\overline{W} \mb{1}_n = \overline{W}^\top \mb{1}_n$ (and similarly $\overline{A} \mb{1}_n = \overline{A}^\top \mb{1}_n$). It is column (resp. row) stochastic if $\overline{W}^\top \mb{1}_n = \mb{1}_n$ (resp. $\overline{W} \mb{1}_n = \mb{1}_n$) and bi-stochastic if both row and column stochastic.
An undirected network with symmetric $W$ (and $A$) is weight-symmetric and weight-balanced. A weight-symmetric and row (or column) stochastic network is also bi-stochastic.

\subsection{Formulation}
The distributed consensus-constraint optimization problem is as follows:
\begin{align}\nonumber
\text{minimize}_{\mb{x} \in \mathbb{R}^{mn}}
F(\mb{x}),\qquad F(\mb x) = \sum_{i=1}^{n} f_i(\mb{x}_i)\\\label{eq_prob} \text{subject to} ~ \mb{x}_1 = \mb{x}_2 = \dots = \mb{x}_n.
\end{align}
Problem \eqref{eq_prob} can be reformulated as
the \textit{Laplacian-constraint} formulation \cite{gharesifard2013distributed},
\begin{align} \label{eq_prob_laplac}
\text{minimize}_{\mb{x} \in \mathbb{R}^{nm}}
 F(\mb x) = \sum_{i=1}^{n} f_i(\mb{x}_i)~~
 \text{subject to}~~\mc{L}\mb{x} = \mb{0}_{nm}.
\end{align}
where $\mc{L} := L \otimes I_m$ and $L$ is the Laplacian of the weight-balanced digraph $\mc{G}$.
Recall from consensus literature \cite{SensNets:Olfati04} that state dynamics in the form $\dot{\mb{x}}=\mc{L}\mb{x}$ over strongly-connected networks results in state consensus $\mb{x}_1=\mb{x}_2=\dots=\mb{x}_n$. This is because, for strongly-connected network, the equilibrium $\mc{L}\mb{x}=\mb{0}_{nm}$ holds if and only if $\mb{x}_1=\mb{x}_2=\dots=\mb{x}_n$ (i.e., consensus is achieved). This gives the intuition why the two formulations are the same. Therefore, for the transition from problem \eqref{eq_prob} to problem \eqref{eq_prob_laplac}, the role or condition of the network and edges is that they must satisfy strong-connectivity. See more details in \cite[Section~III]{gharesifard2013distributed}
Then, under certain conditions (see \cite{gharesifard2013distributed,wei_ecc} for details), problem \eqref{eq_prob} is shown to be equivalent with general \textit{unconstrained} formulation\footnote{Let the optimal state of formlation~\eqref{eq_prob_general} be $\mb{x}^*$, then the optimal state of formulations~\eqref{eq_prob} and \eqref{eq_prob_laplac} is in the form $\mb{1}_n \otimes \mb{x}^*$. } as,
\begin{align}\label{eq_prob_general}
	\text{minimize}_{\mb{x} \in \mathbb{R}^{m}} &
	F(\mb x) = \sum_{i=1}^{n} f_i(\mb{x})
\end{align}
In this work we solve problem~\eqref{eq_prob} and the solution can be easily extended to problems~\eqref{eq_prob_laplac} and~\eqref{eq_prob_general} (similar to \cite{gharesifard2013distributed,wei_ecc}).
\begin{ass} \label{ass_cost}
The local cost functions $f_i: \mathbb{R}^m \mapsto \mathbb{R}$ are smooth, and strictly convex with locally $\gamma$-Lipschitz gradient.
\end{ass}
This assumption implies that each local cost~$f_i: \mathbb{R}^m \mapsto \mathbb{R}$ is (locally) twice differentiable and strictly convex.
Then, the optimal state of \eqref{eq_prob_general} is in the form $\mb{x}^* := \mb{1}_n \otimes \overline{\mb{x}}^*$ with $\overline{\mb{x}}^* \in \mathbb{R}^m$ and ${(\mathbf 1_n^\top \otimes I_m) \boldsymbol{\nabla} F(\mb{x}^*) = \mb{0}_m}$ ~\cite{gharesifard2013distributed}.
For application in D-SVM \cite{dsvm}, we assume that $f_i(\mb{x}_i) = \sum_{j=1}^m f_{i,j}(x_{i,j})$ with $x_{i,j}$ denoting the $j$th element of $\mb{x}_i$.

\section{The Proposed Continuous-Time Model}\label{sec_dyn}

\subsection{The Networked Dynamics}
To solve problem~\eqref{eq_prob}, the following continuous-time linear dynamics is proposed:
\begin{align} \label{eq_xdot}	
	\dot{\mb{x}}_i &= -\sum_{j=1}^{n} w^q_{ij}(\mb{x}_i-\mb{x}_j)-\alpha \mb{y}_i, \\ \label{eq_ydot}	
	\dot{\mb{y}}_i &= -\sum_{j=1}^{n} a^q_{ij}(\mb{y}_i-\mb{y}_j) + \partial_t \boldsymbol{\nabla} f_i(\mb{x}_i),
\end{align}
with $n$ as the number of nodes/agents over the network, vector ${\mb{x}_i \in \mathbb R^m}$ as the node $i$'s state, vector~${\mb{y}_i \in \mathbb R^m}$ as an auxiliary variable for gradient-tracking (GT), ${\dot{\mb{x}}_i=\partial_t \mb{x}}_i$, and~${\alpha \in \mathbb{R}_{>0}}$ as the GT step-size. Note that
the general time index is denoted by $t$ and the network switching signal is denoted by index $q$. This switching signal $q:t \mapsto Q$  is a function of time, taking integer values in the set $Q$; in fact, $q(t)$ tells us what \textit{configuration} the graph topology and link weights have at time $t$. $q(t)$ can be periodic or the links can randomly disappear or reappear with a fixed probability. For example, consider $10$ strongly-connected graph topologies $\{\mc{G}_0,\mc{G}_2,\dots,\mc{G}_{9}\}$; the switching signal could be $q=\lfloor \frac{t}{10} \rfloor$ (where $\lfloor x \rfloor$ denotes greatest integer less than or equal to $x$); this switching signal gives the index of the associated graph in the set $Q=\{0,1,\dots,9\}$ at time $t$. The only constraint on these network topologies is that they must be strongly-connected at all time-instants.
Define the Hessian matrix~${H\coloneqq\text{blockdiag}[\boldsymbol{ \nabla}^2 f_i(\mb{x}_i)]}$, then, Eq. \eqref{eq_xdot}-\eqref{eq_ydot} is written in a compact form
\begin{align} \label{eq_xydot}
	&\left(\begin{array}{c} \dot{\mb{x}} \\ \dot{\mb{y}} \end{array} \right) = M(t,\alpha ) \left(\begin{array}{c} {\mb{x}} \\ {\mb{y}} \end{array} \right),
\\ \label{eq_M}
M(t,\alpha ) = &\left(\begin{array}{cc} \overline{W} \otimes I_m & -\alpha I_{mn} \\ H(\overline{W}\otimes I_m) & \overline{A} \otimes I_m - \alpha H
\end{array} \right).
\end{align}

We make the following assumption on $ \mc{G}$, $W$ and $A$.
\begin{ass} \label{ass_Wg}
        The graph $\mc{G}$ is directed, and strongly connected at every time $t$. The link weights are positive and $\sum_{j=1}^n w^q_{ij} <1$, $\sum_{j=1}^n a^q_{ij} <1$.
		The matrices~${W}$ and~${A}$ are \textit{weight-balanced}.
\end{ass}
	The assumption that $\sum_{j=1}^n w^q_{ij} <1$, $\sum_{j=1}^n a^q_{ij} <1$ is for the sake of proof analysis on bounding $\alpha$ (see Sections~\ref{sec_conv} and \ref{sec_bound}). In case of having $\max\{\sum_{j=1}^n w^q_{ij} ,\sum_{j=1}^n a^q_{ij}\}=c>1$ the $\alpha$ parameter needs to be reduced by a factor of $c$ accordingly.
	Also, the matrices~${W}$ and~${A}$ can be equal (as a special case). The definition of matrices~${W}$ and~${A}$ in the Assumption~\ref{ass_Wg} are relaxed as compared to stochastic features in many existing literature.  This relaxes the stochastic weight design in case of link failure or switching topologies.
	
	The difference between bi-stochastic weight design and balanced weight design is explained here. Recall that bi-stochastic link weights imply that $\sum_{j=1}^n w^q_{ij} = \sum_{i=1}^n w^q_{ij} =1 $, $\sum_{j=1}^n a^q_{ij} = \sum_{i=1}^n a^q_{ij} = 1$. On the other hand, the weight-balanced design is more relaxed as $\sum_{j=1}^n w^q_{ij} = \sum_{i=1}^n w^q_{ij}  $, $\sum_{j=1}^n a^q_{ij} = \sum_{i=1}^n a^q_{ij} $ (there is no need the sum to equal to $1$). This relaxation significantly reduces the complexity of the algorithm, particularly for time-varying network topologies. For example consider the case that a bidirectional symmetric link between nodes $a$, $b$ is removed. Thus, the link weights $w_{ab}=w_{ba}$ are removed while the sum of the weights still remains balanced, i.e., $\sum_{j=1}^n w^q_{ij} - w_{ab} = \sum_{i=1}^n w^q_{ij} - w_{ba}$. However, since the weight-stochasticity does not hold anymore $\sum_{j=1}^n w^q_{ij} - w_{ab} = \sum_{i=1}^n w^q_{ij} - w_{ba} \neq 1$, we need to redesign the rest of the link weights to satisfy bi-stochasticity.
		\begin{rem}
			The existing optimization literature mostly requires bi-stochastic weigh design \cite{khan_AB,add-opt,pushpull_nedic}. One drawback of such a requirement is in the case of link removal when the network loses the bi-stochastic condition. This implies that these existing algorithms do not converge in the case of link removal. Therefore, the weight compensation design algorithms, e.g. \cite{6426252,cons_drop_siam}, are needed to redesign the weights to make the network bi-stochastic again for convergence. This adds more complexity to the solution whenever the network topology changes. Other than the extra complexity, it is likely that the stochastic weight design cannot be even satisfied. Note that not all network topologies have the necessary condition for bi-stochastic weight design. More discussions and illustrations on this are given in Section~\ref{sec_discrete}.
		\end{rem}


\begin{lem} \label{lem_laplacian}  \cite[Theorem~3]{SensNets:Olfati04}
Given SC digraph $  \mc{G}$ of size $n$, the real part of eigenvalues of ${\overline{W}}$ is \textit{non-positive}, with the algebraic multiplicity of zero eigenvalue equal to $1$.
For its irreducible $0$-$1$ structured Laplacian matrix ${\overline{W}}$, consider the right and left eigenvectors $\mb{v}_1$ and $\mb{u}_1^\top$ (including the ones associated with the $0$ eigenvalue) satisfying $\mb{u}_1^\top \mb{v}_1 = 1$. Then, the matrix $R = \mb{v}_1^\top \mb{u}_1$ is real-valued. Further, for the $0$ eigenvalue  $\mb{v}_1 = \frac{1}{\sqrt{n}}\mb{1}_n$.
\end{lem}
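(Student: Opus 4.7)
The plan is to split the statement into four parts and handle each with a standard Laplacian argument: (i) all eigenvalues lie in the closed LHP, (ii) $0$ is a simple eigenvalue, (iii) the right and left eigenvectors for $0$ are proportional to $\mb{1}_n$, and (iv) the associated rank-one object $R$ is real. Since the lemma essentially repackages classical consensus-matrix facts (as the citation to \cite{SensNets:Olfati04} indicates), the aim is to verify each part under our standing assumptions rather than to invent new machinery.

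For part (i), I would apply the Gershgorin disc theorem to $\overline{W}$. By construction $\overline{w}_{ii}=-\sum_{j\neq i}w_{ij}\le 0$ and $\sum_{j\neq i}|\overline{w}_{ij}|=|\overline{w}_{ii}|$, so the Gershgorin disc for row $i$ is the closed ball centred at the real number $\overline{w}_{ii}$ with radius $|\overline{w}_{ii}|$. Every such disc sits in $\{z:\mathrm{Re}(z)\le 0\}$ and meets the imaginary axis only at the origin, yielding $\mathrm{Re}(\sigma(\overline{W}))\le 0$. For part (ii), note that strong connectivity of $\mc{G}$ makes $\overline{W}$ irreducible (no nontrivial invariant index subset). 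Choose $c>\max_i|\overline{w}_{ii}|$ and form $M=\overline{W}+cI$, which is entrywise nonnegative and irreducible. Perron–Frobenius then gives that the spectral radius $c$ of $M$ is a simple eigenvalue, and translating back shows $0$ is a simple eigenvalue of $\overline{W}$.

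For part (iii), the right eigenvector is immediate: row-sum-zero implies $\overline{W}\mb{1}_n=0$, and unit-norm normalisation gives $\mb{v}_1=\frac{1}{\sqrt n}\mb{1}_n$. For the left eigenvector, I would invoke Assumption~\ref{ass_Wg}: weight-balancedness is precisely $\overline{W}^\top\mb{1}_n=\overline{W}\mb{1}_n=0$, so the left null-vector is also proportional to $\mb{1}_n$. Normalising to satisfy the biorthogonality condition $\mb{u}_1^\top \mb{v}_1=1$ forces $\mb{u}_1=\frac{1}{\sqrt n}\mb{1}_n$.

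Part (iv) is then a one-liner: both $\mb{v}_1$ and $\mb{u}_1$ are real vectors, so $R=\mb{v}_1^\top\mb{u}_1$ (equivalently the spectral projector $\mb{v}_1\mb{u}_1^\top=\frac{1}{n}\mb{1}_n\mb{1}_n^\top$, which I suspect is what is meant) is real-valued. The main obstacle, if any, is part (ii): simplicity of the zero eigenvalue genuinely needs the strong-connectivity/irreducibility hypothesis and a Perron–Frobenius-type argument, whereas the other three parts follow by direct computation from the row-sum-zero and weight-balanced structure. It is also worth noting that the lemma would fail without weight-balancedness, since for a merely strongly connected digraph the left Perron eigenvector of $\overline{W}$ need not be $\mb{1}_n$; this is exactly why Assumption~\ref{ass_Wg} is imposed.
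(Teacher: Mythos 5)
The paper offers no proof of this lemma at all --- it is imported verbatim as \cite[Theorem~3]{SensNets:Olfati04} --- so the only question is whether your reconstruction is sound, and it essentially is: the Gershgorin argument for the closed left half-plane, the shift $M=\overline{W}+cI$ with $c>\max_i|\overline{w}_{ii}|$ followed by Perron--Frobenius for simplicity of the zero eigenvalue, and the observation that $\overline{W}\mb{1}_n=\mb{0}_n$ forces $\mb{v}_1=\frac{1}{\sqrt{n}}\mb{1}_n$ are exactly the standard steps behind the cited theorem. Your reading of $R$ as the (real) spectral projector $\mb{v}_1\mb{u}_1^\top$ for the zero eigenvalue is also the right interpretation of the garbled notation. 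Two small caveats. First, realness of $R$ does \emph{not} require weight-balancedness: after the shift, Perron--Frobenius already guarantees a real (indeed nonnegative) left Perron vector $\mb{u}_1$ for any irreducible nonnegative matrix, so $R$ is real for a general SC digraph; your closing remark that ``the lemma would fail without weight-balancedness'' applies only to your strengthened conclusion $\mb{u}_1=\frac{1}{\sqrt{n}}\mb{1}_n$, which the lemma never asserts. Second, the paper deliberately keeps $\mb{u}_1$ general here and treats it separately in Lemma~\ref{lem_Wg2} (nonnegativity with $\sum_i u_{1,i}>0$), precisely because the later perturbation computation in Eq.~\eqref{eq_sum_df} only needs that weaker property; by importing Assumption~\ref{ass_Wg} into this lemma you prove more than is claimed but also slightly blur the division of labour between the two lemmas.
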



\begin{lem} \label{lem_Wg2}
        For the directed graph $  \mc{G}$ described in Assumption~\ref{ass_Wg} and its associated Laplacian $\overline{W}$, the left eigenvector $\mb{u}_1^\top$ of the $0$ eigenvalue is non-negative. For such a matrix one can assume the left eigenvector satisfies $\sum_{i=1}^n u_{1,i} >0$.
\end{lem}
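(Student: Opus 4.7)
The plan is to reduce the claim to the weight-balanced assumption and then appeal to the simplicity of the zero eigenvalue from Lemma~\ref{lem_laplacian}. First, I would observe that the definition of the Laplacian already gives $\overline{W}\mathbf{1}_n = \mathbf{0}_n$ (row sums are zero by construction). Under Assumption~\ref{ass_Wg}, $W$ is weight-balanced, i.e., $\sum_{j} w_{ij} = \sum_{j} w_{ji}$ for every $i$, which is exactly the statement $\mathbf{1}_n^\top \overline{W} = \mathbf{0}_n^\top$. Hence $\mathbf{1}_n$ is a left eigenvector of $\overline{W}$ associated with the zero eigenvalue, and it has all positive entries.

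Next, I would use the strong connectivity of $\mathcal{G}$ together with Lemma~\ref{lem_laplacian} to conclude that the zero eigenvalue of $\overline{W}$ is algebraically simple. This makes the left eigenspace at $0$ one-dimensional, so every left eigenvector $\mathbf{u}_1$ associated with the zero eigenvalue is a scalar multiple of $\mathbf{1}_n$. Choosing the scalar positive, $\mathbf{u}_1$ is non-negative (in fact entry-wise strictly positive).

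To match the normalization $\mathbf{u}_1^\top \mathbf{v}_1 = 1$ used in Lemma~\ref{lem_laplacian} with $\mathbf{v}_1 = \frac{1}{\sqrt n}\mathbf{1}_n$, I would pick $\mathbf{u}_1 = \frac{1}{\sqrt n}\mathbf{1}_n$. Then $\sum_{i=1}^n u_{1,i} = \sqrt{n} > 0$, establishing the second part of the claim.

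The argument is essentially one observation and one citation, so there is no serious obstacle; the only delicate point is recognizing that the conclusion rests entirely on the weight-balanced hypothesis, which is what allows $\mathbf{1}_n^\top$ to sit in the left null space of $\overline{W}$ and thereby bypasses any need for a Perron--Frobenius argument on the shifted non-negative irreducible matrix $\overline{W}+cI_n$. If weight-balance were dropped, one would still get non-negativity (via Perron--Frobenius applied to $\overline{W}+cI_n$ for sufficiently large $c$), but the clean identification $\mathbf{u}_1 \propto \mathbf{1}_n$, and with it the explicit positive value of $\sum_i u_{1,i}$, would be lost.
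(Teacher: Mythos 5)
Your proof is correct and is essentially the argument the paper implicitly relies on: the paper gives no in-line proof of Lemma~\ref{lem_Wg2} and simply defers to \cite[Corollary~2]{SensNets:Olfati04}, and your chain (weight-balance gives $\mathbf{1}_n^\top\overline{W}=\mathbf{0}_n^\top$ since $\overline{W}\mathbf{1}_n=\mathbf{0}_n$ by construction; strong connectivity via Lemma~\ref{lem_laplacian} makes the zero eigenvalue simple, so $\mathbf{u}_1\propto\mathbf{1}_n$ and may be taken positive, with $\sum_{i=1}^n u_{1,i}=\sqrt{n}>0$ under the normalization $\mathbf{u}_1^\top\mathbf{v}_1=1$) is exactly the standard derivation behind that citation. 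Your closing observation is also apt: only the explicit identification $\mathbf{u}_1\propto\mathbf{1}_n$ depends on weight-balance, while bare non-negativity would still follow from Perron--Frobenius applied to a shifted irreducible non-negative matrix.
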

The above is common in weighted-consensus literature to reach a weighted average of the states, see \cite[Corollary~2]{SensNets:Olfati04}. Recalling  arguments from consensus literature, from Assumption~\ref{ass_Wg} and the disagreement-decay structure of \eqref{eq_xdot}-\eqref{eq_ydot}, we obtain (asymptotically):
\begin{align} \label{eq_sumxdot}
\sum_{i=1}^n \dot{\mb{y}}_i  &= \sum_{i=1}^n \partial_t \boldsymbol{ \nabla} f_i(\mb{x}_i), \\ \label{eq_sumydot}
	\sum_{i=1}^n \dot{\mb{x}}_i
&= -\alpha \sum_{i=1}^n\mb{y}_i.
\end{align}
The above directly follows from the consensus-type structure of the first terms in Eq.~\eqref{eq_xdot}-\eqref{eq_ydot}.
By setting initial values of~$\mb{y}(0)=\mb{0}_{nm}$ and simple integration, we get that $\sum_{i=1}^n \dot{\mb{x}}_i $ tracks $ - \sum_{i=1}^n \boldsymbol{ \nabla} f_i(\mb{x}_i)$ as the gradient sum and eventually reaches zero as discussed next. By setting  ${\dot {\mathbf x}_i =0_m}$, the equilibrium $\underline{\mb{x}}^*$ of the dynamics~\eqref{eq_xdot}-\eqref{eq_ydot}  satisfies~${(\mathbf 1_n^\top \otimes I_m) \boldsymbol{ \nabla} F(\mb{x}^*) = \mb{0}_m}$. This holds for the optimal state of~\eqref{eq_prob} as well~\cite{gharesifard2013distributed}.

\begin{lem} \label{lem_invariant}
	Let~$\mb{y}(0)=\mb{0}_{nm}$ and $\mb{x}(0) \neq \mb{1}_n \otimes  {\mb{x}}_0$, 
	for some non-zero~$\mathbf{x}_0\in\mathbb{R}^m$. The global equilibrium of~\eqref{eq_xdot}-\eqref{eq_ydot} includes the states in the form~$[\mb{1}_n \otimes \overline{ \mb{x}}^*;\mb{0}_{nm}]$ (as the invariant set) with $\overline{ \mb{x}}^* \in\mathbb{R}^m$ and includes the optimizer $\mb{x}^*$ of problem \eqref{eq_prob} which satisfies $(\mathbf 1_n^\top \otimes I_m) \boldsymbol{\nabla} F(\mb{x}^*) = \mb{0}_m$.
\end{lem}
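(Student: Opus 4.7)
The plan is to characterize the equilibrium (invariant) set of the vector field \eqref{eq_xydot} directly and then verify that the optimizer sits inside it. First, setting $\dot{\mathbf{x}}=\mathbf{0}$ and $\dot{\mathbf{y}}=\mathbf{0}$ in \eqref{eq_xdot}-\eqref{eq_ydot}, I observe that $\partial_t \boldsymbol{\nabla} f_i(\mathbf{x}_i)=\boldsymbol{\nabla}^2 f_i(\mathbf{x}_i)\,\dot{\mathbf{x}}_i$ vanishes automatically at any equilibrium, so the $\mathbf{y}$-equation reduces to $(\overline{A}\otimes I_m)\mathbf{y}=\mathbf{0}_{nm}$. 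Under Assumption~\ref{ass_Wg} the digraph is SC at every time and $\overline{A}$ is weight-balanced, so Lemma~\ref{lem_laplacian} gives that the zero eigenvalue of $\overline{A}$ is simple with right eigenvector $\mathbf{1}_n$, forcing any equilibrium $\mathbf{y}$ to take the consensus form $\mathbf{y}=\mathbf{1}_n\otimes\overline{\mathbf{y}}$ for some $\overline{\mathbf{y}}\in\mathbb{R}^m$.

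Next I would exploit weight-balancedness of $\overline{W}$, namely $\mathbf{1}_n^\top\overline{W}=\mathbf{0}_n^\top$, to eliminate $\overline{\mathbf{y}}$. Premultiplying the equilibrium form of \eqref{eq_xdot}, which reads $(\overline{W}\otimes I_m)\mathbf{x}=\alpha\,\mathbf{y}$, by $(\mathbf{1}_n^\top\otimes I_m)$ kills the left-hand side and leaves $\alpha n\,\overline{\mathbf{y}}=\mathbf{0}_m$, so $\overline{\mathbf{y}}=\mathbf{0}_m$ and $\mathbf{y}^*=\mathbf{0}_{nm}$. Substituting back into the $\mathbf{x}$-equation gives $(\overline{W}\otimes I_m)\mathbf{x}^*=\mathbf{0}_{nm}$, and reapplying Lemma~\ref{lem_laplacian} to $\overline{W}$ pins $\mathbf{x}^*$ to the consensus subspace, $\mathbf{x}^*=\mathbf{1}_n\otimes\overline{\mathbf{x}}^*$ for some $\overline{\mathbf{x}}^*\in\mathbb{R}^m$. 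This establishes that the equilibrium set coincides with the invariant manifold $\{(\mathbf{1}_n\otimes\overline{\mathbf{x}},\mathbf{0}_{nm}):\overline{\mathbf{x}}\in\mathbb{R}^m\}$.

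To place the optimizer of \eqref{eq_prob} inside this set, I combine the conservation law \eqref{eq_sumxdot} with the initialization $\mathbf{y}(0)=\mathbf{0}_{nm}$. Weight-balancedness of $\overline{A}$ kills the consensus coupling in $\sum_i\dot{\mathbf{y}}_i$, so $\sum_i \mathbf{y}_i(t)-\sum_i\boldsymbol{\nabla} f_i(\mathbf{x}_i(t))$ is time-invariant, and at the equilibrium (where $\mathbf{y}^*=\mathbf{0}$ and $\mathbf{x}^*=\mathbf{1}_n\otimes\overline{\mathbf{x}}^*$) it yields $\sum_i\boldsymbol{\nabla} f_i(\overline{\mathbf{x}}^*)=\sum_i\boldsymbol{\nabla} f_i(\mathbf{x}_i(0))$. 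Under Assumption~\ref{ass_cost}, strict convexity of $F$ and the first-order optimality characterization from \cite{gharesifard2013distributed} give a unique optimizer $\mathbf{x}^*=\mathbf{1}_n\otimes\overline{\mathbf{x}}^*$ with $(\mathbf{1}_n^\top\otimes I_m)\boldsymbol{\nabla} F(\mathbf{x}^*)=\mathbf{0}_m$, and this point trivially satisfies the equilibrium equations with $\mathbf{y}^*=\mathbf{0}_{nm}$, so it belongs to the invariant set.

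The main obstacle I anticipate is the subtle gap between \emph{the optimizer lies in the equilibrium set} and \emph{the trajectory converges to the optimizer}. My analysis only carves out a full $m$-dimensional family of equilibria (one per consensus value $\overline{\mathbf{x}}$), and the conservation law by itself does not single out the optimizer from this family without an initialization that fixes the sum-gradient compatibly. The hypothesis $\mathbf{x}(0)\neq \mathbf{1}_n\otimes\mathbf{x}_0$ excludes the degenerate case of starting already at a non-optimal consensus point, but the definitive selection of the optimizer as the actual attractor is not a consequence of this lemma; it is the content of the stability and Lyapunov-type analysis carried out later in the paper.
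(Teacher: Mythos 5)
Your proof is correct, and it takes a genuinely different (in fact stronger) route than the paper's. The paper's own proof runs only in the \emph{sufficiency} direction: it substitutes the candidate states $[\mb{1}_n \otimes \overline{\mb{x}}^*;\mb{0}_{nm}]$ into \eqref{eq_xdot}--\eqref{eq_ydot}, observes $\dot{\mb{x}}_i=\mb{0}_m$ and $\dot{\mb{y}}_i=\boldsymbol{\nabla}^2 f_i(\overline{\mb{x}}^*)\dot{\mb{x}}_i=\mb{0}_m$, and then uses the sum-dynamics \eqref{eq_sumxdot}--\eqref{eq_sumydot} to check that the optimizer, which satisfies $(\mathbf 1_n^\top \otimes I_m)\boldsymbol{\nabla} F(\mb{x}^*)=\mb{0}_m$, lies among these fixed points. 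You instead solve the fixed-point equations outright: $\dot{\mb{x}}=\mb{0}$ reduces the $\mb{y}$-equation to $(\overline{A}\otimes I_m)\mb{y}=\mb{0}$, Lemma~\ref{lem_laplacian} forces $\mb{y}=\mb{1}_n\otimes\overline{\mb{y}}$, left-multiplication of $(\overline{W}\otimes I_m)\mb{x}=\alpha\mb{y}$ by $\mb{1}_n^\top\otimes I_m$ together with weight-balancedness kills $\overline{\mb{y}}$, and the kernel of $\overline{W}\otimes I_m$ then pins $\mb{x}$ to the consensus subspace. This yields the \emph{exact} equilibrium set rather than merely exhibiting a family of equilibria, which is a useful strengthening: it is essentially the fact Theorem~\ref{thm_lyapunov} later relies on when it identifies the limit with the null space $\mc{N}(M)=\text{span}\{[\mb{1}_n;\mb{0}_n]\otimes I_m\}$. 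Two minor remarks: your conservation-law aside (that $\sum_i\mb{y}_i(t)-\sum_i\boldsymbol{\nabla} f_i(\mb{x}_i(t))$ is constant, giving $\sum_i\boldsymbol{\nabla} f_i(\overline{\mb{x}}^*)=\sum_i\boldsymbol{\nabla} f_i(\mb{x}_i(0))$ at the limit) concerns trajectories rather than the equilibrium set and is not needed for the lemma's claim; and your closing paragraph correctly scopes the result --- the selection of the optimizer as the actual attractor is deferred to Theorem~\ref{thm_zeroeig} and Theorem~\ref{thm_lyapunov}, exactly as the paper intends.
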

\begin{proof}
	From the consensus-type structure of \eqref{eq_xdot}-\eqref{eq_ydot}, the following holds for any~${\mb{x}=\mb{1}_n \otimes \overline{ \mb{x}}^*}, \overline{ \mb{x}}^* \in \mathbb{R}^m$; $\dot{\mb{x}}_i = \mb{0}_m$ from~\eqref{eq_xdot}, and $\dot{\mb{y}}_i = \partial_t \boldsymbol{ \nabla} f_i(\overline{ \mb{x}}^*)=  \boldsymbol{ \nabla}^2 f_i(\overline{ \mb{x}}^*) \dot{\mb{x}}_i = \mb{0}_m$ from~\eqref{eq_ydot}. From \eqref{eq_sumxdot}-\eqref{eq_sumydot}, the following uniquely holds for any~$\mb{x}=\mb{x}^*$ (which is also in the form $\mb{1}_n \otimes \overline{ \mb{x}}^*$),
	\[\sum_{i=1}^n \dot{\mb{x}}_i = -\alpha (\mathbf 1_n^\top \otimes I_m) \boldsymbol{ \nabla} F(\mb{x}^*) =  \mb{0}_m.
	\]
	This shows that~$[\mb{x}^*;\mb{0}_{nm}]$ is an invariant equilibrium under dynamics~\eqref{eq_xdot}-\eqref{eq_ydot} and proves the lemma.
\end{proof}
Lemma~\ref{lem_invariant} defines the invariant set of the networked dynamics~\eqref{eq_xdot}-\eqref{eq_ydot} in the form  $[\mb{x}^*;\mb{0}_{nm}]$. Next, we need to show that the dynamics uniquely converges to this~$\mb{x}^*$ as the optimal point of~\eqref{eq_prob}.

\subsection{Proof of Convergence} \label{sec_conv}
\begin{lem} \label{lem_dM} ~\cite{seyranian2003multiparameter,cai2012average} Consider the $n$-by-$n$ matrix~$P(\alpha)$ as a smooth function of~${\alpha \in \mathbb{R}_{\geq0} }$. Let~$P(0)$ has~${l<n}$ equal eigenvalues~$\lambda_1=\ldots=\lambda_l$, associated with linearly independent  right and left unit eigenvectors~$\mb{v}_1,\ldots,\mb{v}_l$ and~$\mb{u}_1,\ldots,\mb{u}_l$.
Let~${P' = \partial_{\alpha} P(\alpha)|_{\alpha=0}}$ and $\lambda_i(\alpha)$ denote the $i$-th eigenvalue as a function of~$\alpha$, which corresponds to~${\lambda_i, i \in \{1,\ldots,l\}}$. Then,~$ \partial_{\alpha}\lambda_{i}|_{\alpha=0}$ is the $i$-th eigenvalue of, 
\[\left(\begin{array}{ccc}
\mb{u}_1^\top P' \mb{v}_1 & \ldots & \mb{u}_1^\top P' \mb{v}_l \\
 & \ddots & \\
\mb{u}_l^\top P' \mb{v}_1 & \ldots & \mb{u}_l^\top P' \mb{v}_l
\end{array} \right).
\]
\end{lem}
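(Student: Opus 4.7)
The plan is to invoke the classical Rellich–Kato style perturbation argument for the first-order eigenvalue shifts of a multiply-degenerate eigenvalue, adapted to the non-symmetric case where left and right eigenvectors differ. The starting point is the eigenvalue equation $P(\alpha)\mb{v}_i(\alpha) = \lambda_i(\alpha)\mb{v}_i(\alpha)$, which, by smoothness of $P(\alpha)$ in $\alpha$, admits a first-order Taylor expansion of both $\lambda_i(\alpha)$ and $\mb{v}_i(\alpha)$ around $\alpha=0$. Differentiating at $\alpha=0$ and using $P(0)\mb{v}_i = \lambda_i \mb{v}_i$ yields
\begin{equation*}
P'\mb{v}_i(0) + (P(0)-\lambda_i I)\,\partial_\alpha \mb{v}_i(0) = (\partial_\alpha \lambda_i)\,\mb{v}_i(0).
\end{equation*}
For a simple eigenvalue, left-multiplying by $\mb{u}_i^\top$ kills the second term because $\mb{u}_i^\top(P(0)-\lambda_i I)=0$, giving directly $\partial_\alpha \lambda_i = \mb{u}_i^\top P' \mb{v}_i$. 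The task in the degenerate case ($l$-fold repeated eigenvalue) is to locate the correct basis within the $l$-dimensional eigenspace.

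The key observation is that any linear combination $\mb{v} = \sum_{k=1}^{l} c_k \mb{v}_k$ is still a right eigenvector of $P(0)$ at eigenvalue $\lambda_1=\dots=\lambda_l$, so we have the freedom to select, for each branch $\lambda_i(\alpha)$, the correct limiting eigenvector $\mb{v}_i(0) = \sum_k c_k^{(i)} \mb{v}_k$. Substituting this ansatz into the differentiated equation and projecting onto each left eigenvector $\mb{u}_j^\top$ (for $j=1,\ldots,l$) eliminates the derivative term and produces the linear system
\begin{equation*}
\sum_{k=1}^{l} (\mb{u}_j^\top P' \mb{v}_k)\, c_k^{(i)} = (\partial_\alpha \lambda_i)\, c_j^{(i)},\qquad j=1,\ldots,l,
\end{equation*}
where I have used the biorthogonality $\mb{u}_j^\top \mb{v}_k = \delta_{jk}$ coming from the normalization in Lemma~\ref{lem_laplacian}. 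This is precisely the statement that the vector $(c_1^{(i)},\ldots,c_l^{(i)})^\top$ is a right eigenvector and $\partial_\alpha \lambda_i|_{\alpha=0}$ is an eigenvalue of the reduced $l\times l$ matrix displayed in the lemma.

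The step I expect to be the main obstacle is justifying the smoothness of the branches $\lambda_i(\alpha)$ and the existence of a smooth choice of eigenvector $\mb{v}_i(\alpha)$ through the degenerate point $\alpha=0$; in general, the branches of a non-symmetric perturbed eigenvalue problem can exhibit cusp-type non-smooth behavior at degeneracies, but under the standing assumption that $P(\alpha)$ depends smoothly on $\alpha$ and that the perturbation $P'$ restricted to the degenerate eigenspace has distinct eigenvalues (a generic condition implicit in the statement), Puiseux-series arguments reduce to ordinary Taylor series and the formal derivation above becomes rigorous. The remaining steps — biorthogonalization of the left/right eigenvector bases and solvability of the projected equation — follow from the hypothesis of linear independence of the $\mb{v}_k$ and $\mb{u}_k$, so the lemma may also be cited directly from standard references \cite{seyranian2003multiparameter,bhatia2007perturbation} rather than reproved in full detail.
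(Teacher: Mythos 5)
The paper does not prove this lemma at all---it is imported verbatim from \cite{seyranian2003multiparameter,cai2012average}---so your derivation is supplying the argument behind the citation rather than paralleling anything in the text. Your derivation is the standard first-order degenerate (Rellich--Kato) perturbation computation and it is essentially correct: differentiating $P(\alpha)\mb{v}_i(\alpha)=\lambda_i(\alpha)\mb{v}_i(\alpha)$, projecting with the left eigenvectors to annihilate the $(P(0)-\lambda_i I)\,\partial_\alpha\mb{v}_i(0)$ term, and using the freedom to rotate within the degenerate eigenspace to obtain the reduced $l\times l$ eigenproblem is exactly how the cited references establish the formula. Two refinements are worth recording. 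First, the condition you flag as the ``main obstacle''---that $P'$ restricted to the eigenspace have distinct eigenvalues---is not actually needed for the first-order statement: for a \emph{semisimple} eigenvalue (which is what the hypothesis of $l$ linearly independent right and left eigenvectors encodes), the perturbed eigenvalues satisfy $\lambda_i(\alpha)=\lambda_i+\alpha\mu_i+o(\alpha)$ with $\mu_i$ the eigenvalues of the reduced matrix, even when that reduced matrix is itself defective; only the higher-order terms can then acquire fractional powers. Second, your use of $\mb{u}_j^\top\mb{v}_k=\delta_{jk}$ is essential and is a genuinely stronger normalization than the ``unit eigenvectors'' stated in the lemma: without biorthonormalizing the two bases one obtains the generalized eigenproblem $\bigl[\mb{u}_j^\top P'\mb{v}_k\bigr]c=\mu\bigl[\mb{u}_j^\top\mb{v}_k\bigr]c$ rather than the displayed matrix. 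You correctly import this normalization (the paper itself glosses over it in a footnote, noting that only the signs of the entries matter for its application in Theorem~\ref{thm_zeroeig}), but it should be stated as a hypothesis rather than derived from ``unit'' eigenvectors. With those two caveats your proof is sound and, if anything, more informative than the paper's bare citation.
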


\begin{thm} \label{thm_zeroeig}
Let Assumptions~\ref{ass_Wg} and conditions in Lemma~\ref{lem_Wg2} hold. For sufficiently small~$\alpha$, all eigenvalues of~$M$ have non-positive real-parts $\forall t>0 $, and the algebraic multiplicity of the zero eigenvalue is~$m$ (as the individual state dimension).
\end{thm}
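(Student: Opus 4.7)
The plan is to exploit the block-lower-triangular structure of $M$ at $\alpha=0$ and then track how the eigenvalues migrate as $\alpha$ grows, using the perturbation result in Lemma~\ref{lem_dM}. At $\alpha=0$, one has
\[
M(t,0)=\begin{pmatrix}\overline{W}\otimes I_m & 0 \\ H(\overline{W}\otimes I_m) & \overline{A}\otimes I_m\end{pmatrix},
\]
whose spectrum is the union $\sigma(\overline{W}\otimes I_m)\cup\sigma(\overline{A}\otimes I_m)$. By Lemma~\ref{lem_laplacian} (applied to each of $\overline{W}$ and $\overline{A}$ under Assumption~\ref{ass_Wg}), both spectra lie in the closed LHP and each contributes exactly one simple zero eigenvalue, so $M(t,0)$ has algebraic multiplicity $2m$ at zero and all remaining eigenvalues strictly in the open LHP. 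Continuity of the spectrum then keeps the strictly-LHP eigenvalues in the LHP for all sufficiently small $\alpha$, so the only issue is what happens to the $2m$-fold zero.

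Next I would apply Lemma~\ref{lem_dM} to the cluster of $2m$ zero eigenvalues. The right null space of $M(t,0)$ consists of vectors of the form $(\mb{1}_n\otimes a;\mb{1}_n\otimes b)$ with $a,b\in\mathbb{R}^m$; using the weight-balanced property of $\overline{W}$ and $\overline{A}$ (Assumption~\ref{ass_Wg}), the left null space consists of $(\mb{1}_n\otimes d - H(\mb{1}_n\otimes c);\,\mb{1}_n\otimes c)$ with $c,d\in\mathbb{R}^m$, since weight-balancedness gives $\mb{1}_n^\top\overline{W}=\mb{1}_n^\top\overline{A}=0$. I would then biorthogonalize these bases and evaluate
\[
P'=\partial_\alpha M\big|_{\alpha=0}=\begin{pmatrix}0 & -I_{mn}\\ 0 & -H\end{pmatrix},
\]
on this $2m$-dimensional invariant subspace. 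A direct calculation, after the biorthogonal correction that cancels the $H$-cross-terms between the $(d)$- and $(c)$-families, yields the reduced matrix
\[
\begin{pmatrix}0 & -I_m \\ 0 & -\tfrac{1}{n}S\end{pmatrix},\qquad S:=\sum_{i=1}^n\boldsymbol{\nabla}^2 f_i(\mb{x}_i).
\]
Under Assumption~\ref{ass_cost}, each $\boldsymbol{\nabla}^2 f_i$ is positive definite, hence $S\succ 0$ and the $2m$ eigenvalues of this reduced matrix are $m$ zeros together with $m$ strictly negative real numbers (the negatives of the eigenvalues of $S/n$). By Lemma~\ref{lem_dM}, $m$ of the zero eigenvalues of $M$ split off at first order into the LHP at rate $O(\alpha)$, while the remaining $m$ have zero first-order derivative.

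To upgrade the latter statement from ``$o(\alpha)$'' to ``exactly zero'', I would invoke Lemma~\ref{lem_invariant}: for every $\alpha$, the $m$-dimensional subspace $\{(\mb{1}_n\otimes \overline{\mb{x}};\mb{0}_{nm}):\overline{\mb{x}}\in\mathbb{R}^m\}$ lies in the kernel of $M(t,\alpha)$ because $(\overline{W}\otimes I_m)(\mb{1}_n\otimes\overline{\mb{x}})=0$ and consequently $H(\overline{W}\otimes I_m)(\mb{1}_n\otimes\overline{\mb{x}})=0$. Hence the algebraic multiplicity of $0$ as an eigenvalue of $M(t,\alpha)$ is always at least $m$. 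Combined with the perturbation analysis showing that exactly $m$ of the original $2m$ zeros persist, the algebraic multiplicity is exactly $m$, and all other eigenvalues have strictly negative real parts for small enough $\alpha$, uniformly in $t$ since the spectrum of $\overline{W}\otimes I_m$ and $\overline{A}\otimes I_m$ depend only on the (finite family of) switching configurations.

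The hardest step I expect is the biorthogonalization and the careful computation of the reduced matrix: the natural bases of the left and right null spaces are not biorthogonal (there is a cross-term involving $H$ between the $(c)$- and $(a)$-families), so one must correct the left basis to make $U^\top V=I$ before the formula of Lemma~\ref{lem_dM} is applicable. Keeping the Kronecker-product bookkeeping straight so that the $-S/n$ block emerges cleanly, and then arguing uniformity in $t$ and in the switching signal $q$ (so that a single $\alpha$ works for all topologies in the finite configuration set), is where the real work lies.
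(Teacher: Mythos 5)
Your proposal is correct and follows essentially the same route as the paper: the splitting $M = M_0 + \alpha M_1$, the block-triangular spectrum $\sigma(M_0)=\sigma(\overline{W}\otimes I_m)\cup\sigma(\overline{A}\otimes I_m)$, and first-order perturbation of the $2m$-fold zero eigenvalue via Lemma~\ref{lem_dM}, concluding that $m$ zeros persist while the other $m$ move into the LHP. You are in fact more careful than the paper on one point --- the paper takes $[\mb{0}_n;\mb{u}_2]\otimes I_m$ as a left null vector of $M_0$, omitting the $-H(\cdot)$ correction in the first block that your biorthogonalization supplies --- but both computations produce a block-triangular reduced matrix whose diagonal blocks are $\mb{0}_{m\times m}$ and a negative-definite averaged Hessian, so the conclusion is the same.
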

\begin{proof}
From~\eqref{eq_M}, let~$M(\alpha)=M_0+\alpha M_1$ with
\begin{eqnarray}\nonumber
	M_0 &=&  \left(\begin{array}{cc} \overline{W} \otimes I_m & \mb{0}_{mn\times mn} \\ H(\overline{W} \otimes I_m) & \overline{A}\otimes I_m \end{array} \right),\\\nonumber
	M_1 &=& \left(\begin{array}{cc} \mb{0}_{mn\times mn} & - {I_{mn}} \\ {\mb{0}_{mn\times mn}} & - H \end{array} \right),
\end{eqnarray}
Since~$M_0$ is block triangular, its eigen-spectrum~$\sigma(\cdot)$  is
\begin{align} \label{eq_sigma}
    \sigma(M_0) = \sigma(\overline{W} \otimes I_m) \cup \sigma(\overline{A} \otimes I_m)
\end{align}
From Lemma~\ref{lem_laplacian}, both matrices ~$\overline{W}$ and~$\overline{A}$ have one isolated zero eigenvalue and the rest in the LHP. The~$m$ sets of eigenvalues of the matrix~$M_0$ (of size $2mn$), associated with dimensions~$j=\{1,\ldots,m\}$ of states~$\mb{x}_i$, then follow as,
$$\operatorname{Re}\{\lambda_{2n,j}\} \leq \ldots \leq \operatorname{Re}\{\lambda_{3,j}\} < \lambda_{2,j} = \lambda_{1,j} = 0,$$
Next, for spectrum analysis, we consider the term~$\alpha M_1$ as a perturbation to $M_0$. Using Lemma~\ref{lem_dM},  one can check how the spectrum of~$M$ changes, as~$M_0$ is perturbed by the values of~$\alpha$ and $ M_1$. In particular, we check the variation of the zero eigenvalues~$\lambda_{1,j}$ and~$\lambda_{2,j}$ of $M_0$  by the perturbation term~$\alpha M_1$. Let~$\lambda_{1,j}(\alpha)$ and~$\lambda_{2,j}(\alpha)$ denote the perturbed eigenvalues associated to $M$. The right and left unit eigenvectors of~$\lambda_{1,j}$,~$\lambda_{2,j}$ follow from Lemma~\ref{lem_laplacian} as\footnote{The normalizing factors of the unit vectors might be ignored as in the followings we only care about the sign of the terms in $U^\top M_1 V$, not the exact values. },
\begin{align} \label{eq_V}
V &= [V_1~ V_2] =
	\frac{1}{\sqrt{n}} \left(\begin{array}{cc}
	\mb{1}_n& \mb{0}_n \\
	\mb{0}_n & \mb{1}_n
\end{array} \right) \otimes I_m
\\ \label{eq_U}
U^\top &= [U_1~ U_2]^\top =\left(\begin{array}{cc}
	\mb{u}_1& \mb{0}_n \\
	\mb{0}_n & \mb{u}_2
\end{array} \right)^\top \otimes I_m
\end{align}
These unit eigenvectors satisfy~$V^\top V=I_{2mn}$ and $U^\top U=I_{2mn}$. From the definition ~$\partial_{\alpha}{dM(\alpha)}|_{\alpha=0}=M_1$ and Lemma~\ref{lem_dM},
    	\begin{align} \label{eq_dmalpha}
    		U^\top M_1 V= \left(\begin{array}{cc}
    \mb{0}_{m\times m}	& \times  \\
    \mb{0}_{m\times m}	& -(\mb{u}_2 \otimes I_m)^\top H  (\frac{1}{\sqrt{n}}\mb{1}_n \otimes I_m)
    \end{array} \right).
    \end{align}
From Lemma~\ref{lem_laplacian}, let $u_{2,i}$ denote the $i$th element of $\mb{u}_2$.
    \begin{align} \label{eq_sum_df}
    	-(\mb{u}_2 \otimes I_m)^\top H  (\frac{1}{\sqrt{n}}\mb{1}_n \otimes I_m) = -\sum_{i=1}^n  \frac{u_{2,i}}{\sqrt{n}} \boldsymbol{  \nabla}^2 f_i(\mb{x}_i) \prec 0,
    \end{align}
This follows from the definition of~$H$, Lemma~\ref{lem_Wg2}, and the strict convexity of the cost function in Assumption~\ref{ass_cost}.
From Lemma~\ref{lem_dM}, ~$\partial_{\alpha} d\lambda_{1,j}|_{\alpha=0}$ and~$\partial_{\alpha} d\lambda_{2,j}|_{\alpha=0}$ follow the eigenvalues of the triangular matrix in~\eqref{eq_dmalpha}. This matrix has~$m$ zero eigenvalues and, from \eqref{eq_sum_df},~$m$ negative eigenvalues and, thus,~$\partial_{\alpha}\lambda_{1,j}|_{\alpha=0} = 0$ and~$\partial_{\alpha}\lambda_{2,j}|_{\alpha=0} < 0$. This implies that~$\alpha M_1$ as a perturbation, pushes the~$m$ zero eigenvalues~$\lambda_{2,j}(\alpha)$ of~$M$ toward the LHP while the other $m$ zero eigenvalues~$\lambda_{1,j}(\alpha)$ remain unchanged. Thus, one can conclude that for sufficiently small~$\alpha$, the eigen-spectrum obeys

\small \begin{align}
    \begin{aligned}
         \operatorname{Re}\{\lambda_{2n,j}(\alpha)\} \leq \ldots \leq \operatorname{Re}\{\lambda_{3,j}(\alpha)\}
        \leq \lambda_{2,j}(\alpha) < \lambda_{1,j}(\alpha) = 0,
    \end{aligned}
\end{align} \normalsize
which completes the proof.
\end{proof}

\begin{rem}
 Any pair of Laplacian matrices $\overline{A},\overline{W}$ whose left and right unit eigenvectors satisfy Eq.~\eqref{eq_sum_df} implies that the perturbed eigenvalue moves towards LHP. The only constraint is to satisfy the consensus property on the first terms (on the right-hand) of Eq.~\eqref{eq_xdot}-\eqref{eq_ydot}. In addition, for homogeneous quadratic cost models, we have $H = \gamma I_n$ which is time-invariant. Then, Lemma~\ref{lem_Wg2} can be more relaxed as Eq.~\eqref{eq_sum_df} easily holds for any $\mb{u}_1^\top$ satisfying $\sum_{i=1}^n u_{1,i} >0$.
\end{rem}

The above theorem implies that, for an admissible range of $\alpha>0$, the time-varying matrix~$M$ has only one set of~$m$ zero eigenvalues associated with the eigenvectors~$V_1$ in~\eqref{eq_V}, and its \textit{null space} $\mc{N}(M) = \text{span}\{[\mb{1}_n ;\mb{0}_n ]\otimes I_m\}
$ is \textit{time-independent}.
Recall that the above proof uses the fact that the eigenvalues are continuous functions of the matrix entries~\cite{stewart1990matrix}.
Note that such matrix perturbation analysis as in Theorem~\ref{thm_zeroeig} allows checking the eigen-spectrum of hybrid systems as \eqref{eq_xydot}-\eqref{eq_M} with time-varying system matrices and discrete jumps due to dynamic network topologies.

\subsection{Bounding $\alpha$} \label{sec_bound}
Next, we need to determine the upper bound on $\alpha$ such that no other eigenvalues of $M_0$ under the perturbation moves to the RHP. Some analysis is given in \cite[Lemma~7]{dsvm} based on the notion of optimal matching distance~${d(\sigma(M),\sigma(M_0)) = \min_{\pi} \max_{1\leq i\leq 2nm} (\lambda_i - \lambda_{\pi(i)}(\alpha))}$ with $\pi$ denoting all possible permutations over $2nm$ symbols \cite{bhatia2007perturbation}. This parameter $d(\sigma(M),\sigma(M_0))$ gives the smallest-radius circle centering at and including~$\lambda_{1,j},\ldots,\lambda_{2n,j}$, i.e., the farthest distance between the eigenvalues of~$M$ and~$M_0$. The bound on $\alpha$ is defined via the following lemma.

\begin{lem} \label{lem_dbound} \cite[Theorem~39.1]{bhatia2007perturbation}
Given the system matrix~${M(\alpha) = M_0 +\alpha M_1}$, the optimal matching distance satisfies ${d(\sigma(M),\sigma(M_0))\leq 4(\lVert M_0\rVert+\lVert M\rVert)^{1-\frac{1}{nm}} \lVert \alpha M_1\rVert^{\frac{1}{nm}}}$.
\end{lem}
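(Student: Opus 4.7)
The plan is to reduce the optimal matching-distance bound to a single-eigenvalue estimate via a determinant and singular-value computation, and then to promote that pointwise bound to a simultaneous pairing through a combinatorial matching argument; the latter is where the leading constant $4$ is absorbed.

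First I would fix an arbitrary eigenvalue $\mu \in \sigma(M)$ with unit right eigenvector $v$. Since $(M-\mu I)v = \mb{0}$, one has $(M_0-\mu I)v = -(M-M_0)v = -\alpha M_1 v$, so $\lVert (M_0-\mu I)v\rVert \leq \lVert \alpha M_1\rVert$. Hence the smallest singular value satisfies $\sigma_{\min}(M_0-\mu I) \leq \lVert \alpha M_1\rVert$, while every other singular value is bounded above by $\lVert M_0-\mu I\rVert \leq \lVert M_0\rVert + |\mu| \leq \lVert M_0\rVert + \lVert M\rVert$. Combining these with the determinantal identity $\prod_{i} |\lambda_i(M_0)-\mu| = |\det(M_0-\mu I)| = \prod_{j}\sigma_j(M_0-\mu I)$, I obtain $\prod_i |\lambda_i(M_0)-\mu| \leq \lVert \alpha M_1\rVert \cdot (\lVert M_0\rVert+\lVert M\rVert)^{nm-1}$, and a geometric-mean step then yields $\min_i|\lambda_i(M_0)-\mu| \leq (\lVert M_0\rVert+\lVert M\rVert)^{1-1/(nm)}\lVert \alpha M_1\rVert^{1/(nm)}$. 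This is the Elsner-type pointwise bound, albeit without the factor $4$.

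Next I would globalize this pointwise estimate to the optimal matching distance $d(\sigma(M),\sigma(M_0))$ that appears in the definition preceding the lemma. The idea is to enclose each eigenvalue of $M$ in a disk of radius equal to the pointwise bound (suitably inflated) and then invoke a Hall-marriage style argument on the bipartite graph whose edges encode disk-overlap between $\sigma(M)$ and $\sigma(M_0)$; this produces a simultaneous permutation $\pi$ realizing the optimal matching. Two successive doublings of the radius---once to pass from ``a nearby eigenvalue exists'' to ``a system of distinct representatives exists'', and again to accommodate the case where several eigenvalues of $M$ cluster near a single eigenvalue of $M_0$---together contribute the factor $4$ appearing in the stated inequality.

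The main obstacle is not the determinantal estimate but the combinatorial matching step, because the pointwise bound alone cannot preclude pathological clusterings in which many eigenvalues of $M$ crowd around one eigenvalue of $M_0$. Ruling such clusterings out rigorously requires either the Hall-theorem argument above, or equivalently a continuity/homotopy deformation of $M_0$ into $M$ along $M_0+s\alpha M_1$ with $s\in[0,1]$, along which one tracks eigenvalue trajectories and bookkeeps the controlled crossings. In the present context the lemma is invoked through \cite[Theorem~39.1]{bhatia2007perturbation}, and only the $1/(nm)$ exponent and the linear dependence on $\lVert \alpha M_1\rVert$ are used, since these are precisely the features that will be matched against $|\operatorname{Re}\{\lambda_{2,j}(\alpha)\}|$ to pin down the admissible step-size $\alpha$ in Section~\ref{sec_bound}.
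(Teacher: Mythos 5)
The paper does not actually prove this lemma: it is imported verbatim as \cite[Theorem~39.1]{bhatia2007perturbation}, so there is no in-paper argument to compare yours against. What you have written is, in substance, the standard textbook proof of that cited result, and the first half of it is correct and complete: fixing $\mu\in\sigma(M)$ with unit eigenvector $v$, the identity $(M_0-\mu I)v=-\alpha M_1 v$ gives $\sigma_{\min}(M_0-\mu I)\le\lVert\alpha M_1\rVert$, the remaining singular values are bounded by $\lVert M_0\rVert+\lVert M\rVert$, and the determinant/geometric-mean step yields Elsner's pointwise spectral-variation bound. The second half is a plan rather than a proof, and you are right that this is where all the difficulty sits: the passage from ``each eigenvalue of $M$ has a nearby eigenvalue of $M_0$'' to a single permutation $\pi$ realizing $d(\sigma(M),\sigma(M_0))$ is exactly what produces the constant $4$. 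In Bhatia's development this is done not by a Hall-marriage argument but by your alternative route, namely a continuity argument along the segment $M_0+s\,\alpha M_1$, $s\in[0,1]$, combined with a theorem on the variation of roots of monic polynomials; the ``two successive doublings'' narrative is plausible bookkeeping but is not itself a proof, so as written the matching step remains a genuine gap if this were meant as a self-contained derivation rather than a citation.

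One concrete inconsistency you have inherited from the statement itself: $M$ is a $2nm\times 2nm$ matrix (the paper's own definition of $d(\sigma(M),\sigma(M_0))$ ranges $\pi$ over $2nm$ symbols), so your determinant $\prod_i|\lambda_i(M_0)-\mu|$ has $2nm$ factors and the geometric-mean step honestly delivers the exponent $\tfrac{1}{2nm}$, not $\tfrac{1}{nm}$. Since $\lVert\alpha M_1\rVert\le\lVert M_0\rVert+\lVert M\rVert$, the $\tfrac{1}{nm}$ version claimed in the lemma is \emph{stronger} than what Elsner's theorem gives in dimension $2nm$, so your argument as bookkept does not establish the inequality exactly as stated; note that the paper itself silently switches to the exponent $\tfrac{1}{2mn}$ in \eqref{eq_Tbound_d}. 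This is a transcription issue in the lemma rather than a flaw in your strategy, but it is worth flagging because your proof, done carefully, proves the $\tfrac{1}{2nm}$ statement and not the one displayed.
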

The sketch of the derivation is shown in Fig.~\ref{fig_perturb}.
\begin{figure}
	\centering
	\includegraphics[width=1.8in]{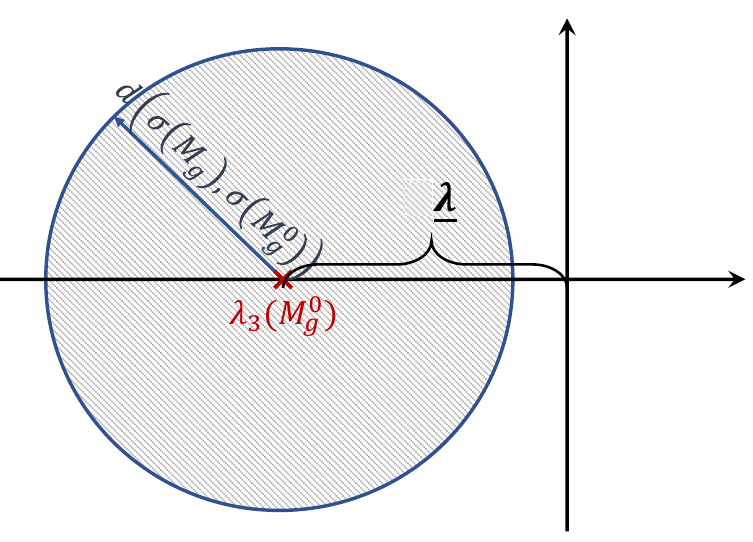}
	\caption{The perturbation analysis to bound $d(\sigma(M),\sigma(M_0))$: the min eigenvalue (in absolute value) of $M_0$ may move towards RHP if $d(\sigma(M),\sigma(M_0))>\underline{\lambda}$.} \label{fig_perturb}
\end{figure}
In order to bound $d(\sigma(M),\sigma(M_0))$ by $\underline{\lambda}$ as the minimum eigenvalue of $M_0$, assuming ${w_{ij},a_{ij}\geq0}$ such that ~$\sum_{j=1}^n w_{ij}<1$ and ~${\sum_{j=1}^n a_{ij}<1}$ (similar to \cite{dsvm}), one can find a bound on $\alpha$ for convergence as follows.

Replacing the spectral norm in Lemma~\ref{lem_dbound} and after some simplification,
\begin{align} \nonumber
 4(|\lambda_{2nm}| + |\lambda_{2nm}|+ \underline{\lambda})^{1-\frac{1}{nm}} \alpha^{\frac{1}{nm}}  \max\{1,\gamma\}^{\frac{1}{nm}} <  \underline{\lambda},
\end{align}
with $\lambda_{2nm}$ denoting the largest (in absolute value) eigenvalue of $M_0$.  This gives
\begin{align} \label{eq_alpha_spec_norm}
0<{\alpha} \leq \frac{ \underline{\lambda}^{nm}}{4^{nm}(2|\lambda_{2nm}| + \underline{\lambda})^{nm-1}   \max\{1,\gamma\}}
\end{align}

The next Lemma borrowed from \cite{SensNets:Olfati04}, is originally known as the \textit{Gresgorin circle Theorem} \cite{hornjohnson} and gives a better perspective on the spectral localization of the eigenvalues of $M_0$,
\begin{lem} \label{lem_gresgorin}\cite[Theorem~2]{SensNets:Olfati04}
Given the Laplacian matrix $\overline{W}$ of a digraph $\mc{G}$, its eigenvalues lie in the following disk (known as the Gresgorin disk) in the complex plane:
\begin{align} \label{eq_gres_disk}
    \mc{D}(\overline{w}_{\max},\overline{w}_{\max}) = \{z \in \mathbb{C}| |z+\overline{w}_{\max}| \leq \overline{w}_{\max} \}.
\end{align}
where $\overline{w}_{\max} = \sum_{j=1,j\neq i}^n |\overline{w}_{ij}|$.
Following the definition of the Laplacian $\overline{w}_{\max} = \max_i \{|\overline{w}_{ii}|\}$.
\end{lem}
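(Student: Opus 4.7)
The plan is to derive the result directly from the classical Gresgorin circle theorem, exploiting the special structure of a Laplacian matrix (diagonal entry equal to the negative of the row-sum of off-diagonal magnitudes), and then to show that all the individual row-disks nest inside a single maximal disk.

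First, I would recall the Gresgorin circle theorem: every eigenvalue of $\overline{W}$ lies in some disk $\mc{D}_i = \{z \in \mathbb{C} : |z - \overline{w}_{ii}| \leq R_i\}$, where $R_i = \sum_{j \neq i} |\overline{w}_{ij}|$. Next I would invoke the Laplacian definition from Section~II: for $i \neq j$ the off-diagonal entries $\overline{w}_{ij} = w_{ij} \geq 0$ (since link weights are positive by Assumption~\ref{ass_Wg}), and the diagonal is $\overline{w}_{ii} = -\sum_{j \neq i} w_{ij} = -R_i$. Substituting into the Gresgorin disk gives $\mc{D}_i = \{z : |z + R_i| \leq R_i\}$, i.e., a disk of radius $R_i$ centered on the negative real axis at $-R_i$.

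The key geometric observation is that every such disk $\mc{D}_i$ passes through the origin and is tangent to the imaginary axis there (its rightmost point is $0$ and its leftmost point is $-2R_i$). Therefore, if $R_i \leq \overline{w}_{\max} = \max_k R_k$, the disk $\mc{D}_i$ is entirely contained in $\mc{D}(\overline{w}_{\max}, \overline{w}_{\max}) = \{z : |z + \overline{w}_{\max}| \leq \overline{w}_{\max}\}$. This can be verified by checking that for any $z \in \mc{D}_i$,
\[
|z + \overline{w}_{\max}| \leq |z + R_i| + (\overline{w}_{\max} - R_i) \leq R_i + (\overline{w}_{\max} - R_i) = \overline{w}_{\max}.
\]
Since the union of the $\mc{D}_i$ contains every eigenvalue, the single disk $\mc{D}(\overline{w}_{\max}, \overline{w}_{\max})$ does too, which is the claim. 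The identity $\overline{w}_{\max} = \max_i |\overline{w}_{ii}|$ follows instantly from the Laplacian relation $\overline{w}_{ii} = -R_i$.

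I do not anticipate a serious obstacle: the only subtle point is recognizing that the common tangency of all row-disks at the origin (a direct consequence of the Laplacian zero-row-sum property together with non-negativity of the off-diagonal entries) is precisely what makes the nesting $\mc{D}_i \subseteq \mc{D}(\overline{w}_{\max}, \overline{w}_{\max})$ hold. If the weights were allowed to be signed, this argument would fail and one would only obtain a union bound rather than a single containing disk.
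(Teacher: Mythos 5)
Your proof is correct and follows essentially the same route as the cited source (\cite[Theorem~2]{SensNets:Olfati04}, whose proof the paper does not reproduce): apply the classical Gershgorin disk theorem, use the Laplacian structure $\overline{w}_{ii}=-R_i$ with non-negative off-diagonal entries to place each row-disk tangent to the imaginary axis at the origin, and nest all of them inside the single disk of radius $\overline{w}_{\max}$ via the triangle inequality. No gaps.
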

Recall that the eigenvalues of $M_0$ coincide with the eigenvalues of $\overline{W}$ and $\overline{A}$, implying that in \eqref{eq_gres_disk}, the radius (and centre) of the Gresgorin disk is $\pm \max_i \{|\overline{w}_{ii}|,|\overline{a}_{ii}|\}$. This is used in the next section to estimate the maximum (absolute) eigenvalue of $M(\alpha)$.

Next, we find another admissible $\alpha$ range similar to the analysis in \cite{add-opt}.
First, we recall from \cite[Appendix]{delay_est} to relate the spectrum of $M(t,\alpha )$ in \eqref{eq_M} to $\alpha$. By proper row/column permutations in \cite[Eq.~(18)]{delay_est}, one can find $\sigma(M)$ as, 

\small \begin{align} \nonumber
\mbox{det}(&\alpha  I_{mn}) \mbox{det}(H(\overline{W}\otimes I_m) +\\ &(\overline{A} \otimes I_m - \alpha H -\lambda I_{mn}) (\frac{1}{\alpha})(\overline{W} \otimes I_m -\lambda I_{mn})) = 0.
\end{align} \normalsize
To make the calculations simpler we let $m=1$ (but it can be extended to any $m>1$),
\begin{align} \label{eq_m=1}
 \mbox{det}(I_{n}) \mbox{det}((\overline{A}  -\lambda I_{n})(\overline{W}  - \lambda I_{n}) +\alpha \lambda H ) = 0
\end{align}


Note that $\alpha=0$ is a root of \eqref{eq_m=1}, and for small values $\alpha>0$ is admissible for stability. We find the other positive root $\alpha = \overline{\alpha}$ of \eqref{eq_m=1} and, due to continuity of $\sigma(M)$ as a function of $\alpha$ \cite{stewart1990matrix}, we can claim that $\alpha \in (0~\overline{\alpha})$ leads to stability of $M(\alpha)$.
Recall that $\alpha = 0$ gives $\mbox{det}((\overline{W}  - \lambda I_{n})(\overline{A}  - \lambda I_{n})) = 0$ and the eigen-spectrum follows as $\sigma(\overline{A}) \cup \sigma(\overline{W})$ with two zero eigenvalues.



For any $\lambda$ in the LHP, from the diagonal structure of $\alpha H$, one can rewrite \eqref{eq_m=1} (with some abuse of notation) as
\begin{align} \nonumber
 \mbox{det}(&(\overline{A}  -\lambda I_{n} \pm \sqrt{\alpha |\lambda| H})(\overline{W}  - \lambda I_{n} \mp \sqrt{\alpha |\lambda| H} ) \\ &\pm \sqrt{\alpha |\lambda| H}(\overline{W}  -\overline{A})) = 0 \label{eq_det_all}
\end{align}
Letting $\overline{A} = \overline{W}$ to make the last term zero gives,
\begin{align} \nonumber
 \mbox{det}(\overline{W}  &- \lambda I_{n} \mp \sqrt{\alpha |\lambda| H} )= \\ &\mbox{det}(\overline{W}  -\lambda (I_{n}   \mp \sqrt{\frac{\alpha  H}{|\lambda|}} )) = 0
\end{align}
with $\lambda$ as the eigenvalue of $\overline{W}$. The new perturbed eigenvalue (in the above) is $\lambda (1 \pm \sqrt{\frac{\alpha  H}{|\lambda|}})$. One can see that the minimum $\overline{\alpha}$ to make this term zero (towards instability) for a non-zero $\lambda$ is
\begin{align}
    \overline{\alpha} = \argmin_{\alpha} |1 - \sqrt{\frac{\alpha  H}{|\lambda|}}|
     \geq \frac{\min \{|\operatorname{Re}\{\lambda_j\}|\neq 0\}}{\max \{H_{ii}\}} = \frac{|\operatorname{Re}\{\lambda_2\}|}{\gamma}
\end{align}
where at the last term we used $H \preceq \gamma I_{n}$.
Then, from perturbation analysis as in Theorem~\ref{thm_zeroeig}, one can claim the stability range as
\begin{align} \label{eq_alphabar0}
    0 < \alpha < \overline{\alpha}:= \frac{|\operatorname{Re}\{\lambda_2\}|}{\gamma}
\end{align}
for which $M(\alpha)$ has only one isolated zero eigenvalue and the rest with negative values. Note that having $H = \gamma I_n$, as in homogeneous quadratics models for CPU balancing \cite{rikos2021optimal} or power scheduling \cite{Kar6345156}, the above gives a tighter bound on $\alpha$.
In general, for $\overline{W} \neq \overline{A}$ matrices, the admissible range changes to,
\begin{align} \label{eq_alphabar}
    0 < \alpha <  \frac{\min \{|\operatorname{Re}\{\lambda_2(\overline{W})\}|,| \operatorname{Re}\{\lambda_2(\overline{A})\}|\}}{\gamma} =: \overline{\alpha}
\end{align}
This is because choosing the diagonal blocks of $M_0$ as both equal to either $\overline{A}$ or $\overline{W}$ gives $\overline{\alpha}:= \frac{|\operatorname{Re}\{\lambda_2(\overline{A})\}|}{\gamma}$ or $\overline{\alpha}:= \frac{|\operatorname{Re}\{\lambda_2(\overline{W})\}|}{\gamma}$ in \eqref{eq_alphabar}, respectively.
This upper-bound on $\alpha$ is oblivious to the size of the system and adjacency matrices (only depends on their eigen-spectrum) and holds for any value of $m \geq 1$.

In general, the stability analysis of hybrid systems as \eqref{eq_xydot}-\eqref{eq_M} are challenging (see some relevant discussions in \cite{cortes2008discontinuous} and also,  on  \textit{hybrid consensus} setups, in \cite[Section~IX]{SensNets:Olfati04}). The above perturbation-based analysis is the key component of the convergence proof along with the proper choice of the Lyapunov function as in \cite{dsvm,SensNets:Olfati04}. The stability is discussed next in the main theorem of this paper.


\begin{thm} \label{thm_lyapunov}
Let Assumptions \eqref{ass_cost}-\eqref{ass_Wg} and conditions in Lemma~\ref{lem_Wg2}, Lemma~\ref{lem_invariant}, and Theorem~\ref{thm_zeroeig} hold. For $\alpha$ satisfying \eqref{eq_alphabar}, the  dynamics~\eqref{eq_xdot}-\eqref{eq_ydot} (and its matrix form \eqref{eq_xydot}-\eqref{eq_M}) converges to the invariant equilibrium~$[\mb{x}^*\otimes \mb{1}_n;\mb{0}_{nm}]$, where~$\mb{x}^*\otimes \mb{1}_n$ denotes the optimizer of the problem~\eqref{eq_prob}.
\end{thm}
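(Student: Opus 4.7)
The plan is to combine the spectral result of Theorem~\ref{thm_zeroeig} with a Lyapunov argument tailored to the switched/hybrid nature of the dynamics (\ref{eq_xydot})-(\ref{eq_M}). By Theorem~\ref{thm_zeroeig} together with the bound (\ref{eq_alphabar}), for every admissible configuration $q \in Q$ the matrix $M^q(\alpha)$ has exactly $m$ zero eigenvalues whose right eigenspace is the time-independent subspace $\mc{N} = \text{span}\{[\mb{1}_n;\mb{0}_n]\otimes I_m\}$, while all other $2mn-m$ eigenvalues lie strictly in the open LHP. This uniform spectral gap across configurations is the essential ingredient that allows the Lyapunov argument to absorb the topology switching.

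The first step is to decompose $\mb{z}(t)=[\mb{x}(t);\mb{y}(t)]=\mb{z}_\parallel(t)+\mb{z}_\perp(t)$ with $\mb{z}_\parallel \in \mc{N}$ and $\mb{z}_\perp$ lying in the complementary invariant subspace spanned by the remaining right eigenvectors. Lemma~\ref{lem_invariant} already identifies every equilibrium in $\mc{N}$ with a point of the form $[\mb{1}_n\otimes\overline{\mb{x}}^*;\mb{0}_{nm}]$; moreover, the initialization $\mb{y}(0)=\mb{0}_{nm}$ together with the gradient-tracking identities (\ref{eq_sumxdot})-(\ref{eq_sumydot}) pins down $\overline{\mb{x}}^*$ as the unique minimizer satisfying $(\mb{1}_n^\top\otimes I_m)\boldsymbol{\nabla}F(\mb{x}^*)=\mb{0}_m$, which by strict convexity (Assumption~\ref{ass_cost}) is the optimizer of (\ref{eq_prob}). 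The second step is to show $\mb{z}_\perp(t)\to\mb{0}$ via a common quadratic Lyapunov function $V(\mb{z}_\perp)=\mb{z}_\perp^\top P\mb{z}_\perp$, $P\succ 0$, chosen so that $PM^q+(M^q)^\top P\prec 0$ on the disagreement subspace for every $q \in Q$ and every admissible Hessian block. Differentiating $V$ along (\ref{eq_xydot}) then yields $\dot V \leq -\eta\,\|\mb{z}_\perp\|^2$ for some $\eta>0$ uniform in $q$ and $t$, giving exponential decay of the disagreement and, combined with the invariance argument, $\mb{z}(t)\to[\mb{x}^*;\mb{0}_{nm}]$.

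The hardest part will be the joint treatment of (i) the discrete jumps of $M^q$ under the switching signal $q(t)$ and (ii) the state-dependence of $H$ through the time-varying Hessians $\boldsymbol{\nabla}^2 f_i(\mb{x}_i(t))$. For (i), I would restrict attention to the finite set $Q$ of strongly connected admissible topologies, in which case a common $P$ can be produced by a Lyapunov-intersection argument analogous to \cite[Section~IX]{SensNets:Olfati04}. For (ii), Assumption~\ref{ass_cost} bounds $H$ uniformly on compact sublevel sets of $V$ into which trajectories remain confined once $\alpha$ satisfies (\ref{eq_alphabar}) for every $q$ simultaneously, keeping $PM^q(t)+M^q(t)^\top P$ uniformly negative definite on the disagreement subspace. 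An alternative route, closer to \cite{dsvm}, is to use a non-quadratic Lyapunov candidate built from $F$ itself plus a disagreement penalty, which bypasses the common-$P$ question at the price of more delicate gradient estimates.
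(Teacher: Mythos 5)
Your overall architecture---the spectral result of Theorem~\ref{thm_zeroeig} on the admissible range \eqref{eq_alphabar}, identification of the limit point via Lemma~\ref{lem_invariant} and the gradient-tracking identities \eqref{eq_sumxdot}--\eqref{eq_sumydot} with $\mb{y}(0)=\mb{0}_{nm}$, and a Lyapunov/stability step to kill the disagreement---is the same as the paper's. The difference is that the paper never constructs a Lyapunov function: after noting that all nonzero eigenvalues of $M(\alpha,t)$ stay in the open LHP uniformly over topologies and that the null space $\text{span}\{[\mb{1}_n;\mb{0}_n]\otimes I_m\}$ is time-independent, it appeals directly to \cite[Corollary~1]{SensNets:Olfati04} for asymptotic stability and reads the equilibrium off the zero eigenspace. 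Your proposal tries to make that step concrete with a common quadratic Lyapunov function, and that is where a genuine gap opens.

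The existence of a single $P\succ 0$ with $PM^q+(M^q)^\top P\prec 0$ on the disagreement subspace for every $q\in Q$ does not follow from the spectral information you have. For switched linear systems, each mode being Hurwitz on the complement of the shared null space is not sufficient for a common quadratic Lyapunov function, and $M^q$ is non-normal, so its eigenvalue locations say nothing about the sign of $PM^q+(M^q)^\top P$ for any prescribed $P$. The intersection argument of \cite[Section~IX]{SensNets:Olfati04} works for pure consensus because, for balanced digraphs, $P=I$ succeeds uniformly ($\overline{W}+\overline{W}^\top$ is minus the Laplacian of the connected mirror graph); it does not transfer to the full $M^q$, whose symmetric part contains the indefinite blocks $-\alpha I_{mn}$ and $H(\overline{W}\otimes I_m)$. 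You would need to exhibit $P$ and verify the inequality, and nothing in Theorem~\ref{thm_zeroeig} guarantees this is possible, let alone uniformly in $q$ and in the admissible Hessians. Two further points need repair: (i) bounding $H$ on compact sublevel sets ``into which trajectories remain confined'' is circular, since confinement is precisely what $V$ is meant to deliver, so you need an a priori boundedness or local argument; and (ii) the ``complementary invariant subspace'' in your decomposition is $q$- and $H$-dependent (the left null eigenvector of $\overline{W}^q$ moves with the topology), so $\mb{z}_\perp$ is not a fixed coordinate and its dynamics pick up cross terms. To be fair, the paper's own proof glosses over the same difficulties by citation rather than construction; but as written, your plan asserts rather than establishes the one inequality that carries the entire convergence claim.
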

\begin{proof}
From Lemma~\ref{lem_invariant},~$[\mb{x}^*\otimes \mb{1}_n;\mb{0}_{nm}]  \in \mathbb{R}^{2mn}$ with $\mb{x}^* \in \mathbb{R}$ is the invariant set under dynamics~\eqref{eq_xydot}-\eqref{eq_M} and belongs to $\mc{N}(M)$. From Theorem~\ref{thm_zeroeig} and for an admissible range of  $\alpha \in (0~\overline{\alpha})$, we showed that the eigenvalues of $M(\alpha,t )$ remain in the LHP except for the zero eigenvalue of multiplicity one. This is \textit{irrespective of the network topology $ \mc{G}$ and the change in $M(\alpha,t )$} which may even allow for possible hybrid stability analysis \cite{goebel2009hybrid}. Following \cite[Corollary~1]{SensNets:Olfati04}, this network of integrators is asymptotically globally stable. On the other hand, its equilibrium is associated with the (eigenspace of) right eigenvectors associated with its zero eigenvalue of multiplicity $m$, which is a set of $m$ decoupled vectors as $\{[\mb{1}_n ;\mb{0}_n ]\otimes I_m\}$. This simply implies that the invariant set $[\mb{x}^*\otimes \mb{1}_n;\mb{0}_{nm}]$ is the equilibrium under dynamics~\eqref{eq_xydot}-\eqref{eq_M}.
\end{proof}
For undirected networks, since the eigenvalues of $M$ are real, one extends the proof to hybrid setups under a switching signal $q$.


\section{The Discrete-Time Model: Improvement over the Existing Models} \label{sec_discrete}
\subsection{Discrete-time version}
For agents with discrete-time models, one can follow similar discussions as for discretized consensus dynamics in \cite[Section~IV]{SensNets:Olfati04}. The discretized version of the CT dynamics~\eqref{eq_xydot} takes the following form,
\begin{align} \label{eq_xydot_d}
	\left(\begin{array}{c} \mb{x}(k+1) \\ \mb{y}(k+1) \end{array} \right) = M_d(\eta,\alpha ) \left(\begin{array}{c} {\mb{x}(k)} \\ {\mb{y}(k)} \end{array} \right),
\end{align}
with~$k$ as the discrete time-step. The most common approximation for~$M_d$ is by Euler Forward Method (EFM)
as~$M_d = I+\eta M$
with~$\eta$ as the (discrete) sampling step-size. This implies that the discretized version of the node dynamics \eqref{eq_xdot}-\eqref{eq_ydot} is in the following form:
\begin{align} \label{eq_xdot_d}	
	\mb{x}_i(k+1) = \mb{x}_i(k) - &\eta \sum_{j=1}^{n} w^q_{ij}(\mb{x}_i(k)-\mb{x}_j(k))-\alpha \mb{y}_i(k), \\ \nonumber	
	\mb{y}_i(k+1) = \mb{y}_i(k) -&\eta \sum_{j=1}^{n} a^q_{ij}(\mb{y}_i(k)-\mb{y}_j(k)) \\ \label{eq_ydot_d}
	&+ \boldsymbol{\nabla} f_i(\mb{x}_i(k+1))-\boldsymbol{\nabla} f_i(\mb{x}_i(k)),
\end{align}

As discussed in Section~\ref{sec_conv}, matrix~$M$ has~$m$ zero eigenvalues while all other eigenvalues have negative real-part. This implies that~$M_d$ given by EFM has~$m$ eigenvalues at~$1$ with $m$ independent (decoupled) eigenvectors, while for stability, its other eigenvalues need to satisfy~$|\lambda_i|<1$ which depends on the sampling step~$\eta$.
The explicit upper-bound for~$\eta$ such that stable CT dynamics remains stable after discretization via first-order Euler approximation is given, for example, in~\cite[Table~I]{axelsson2014discrete} as,
\begin{align}\label{eq_Tbound}
	\eta<\min_{2\leq i \leq 2n,1\leq j\leq m} \frac{2 |Re\{\lambda_{i,j}(\alpha)\}|}{|\lambda_{i,j}(\alpha)|^2},
\end{align}
with~$\lambda_{i,j}(\alpha)$ as the eigenvalue of the time-varying matrix~$M$. For real Laplacian matrices associated with linear consensus protocols, e.g., $\overline{W}$
the discretized model is given as $P_w = I_n+\eta \overline{W}$ which satisfies the following Lemma.

\begin{lem} \cite{Olfati_proceedings} \label{lem_olfati}
   Given $P_w = I_n+\eta \overline{W}$ with $\overline{w}_{\max} = \sum_{j=1,j\neq i}^n |\overline{w}_{ij}|$ (from Lemma~\ref{lem_gresgorin}), $\overline{W}$ as the Laplacian matrix satisfying Assumption~\ref{ass_Wg}, and $0< \eta < \frac{1}{\overline{w}_{\max}}$, the followings hold:
   \begin{itemize}
       \item $P_w$ is row-stochastic and non-negative with one isolated eigenvalue at $1$.
       \item The eigenvalues of $P_w$ are inside the unit circle.
       \item $P_w$ is primitive, i.e., it has only one eigenvalue with maximum modulus.
   \end{itemize}
\end{lem}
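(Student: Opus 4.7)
The plan is to establish each of the three bullets in turn by exploiting the affine relation $\sigma(P_w) = \{1 + \eta \lambda : \lambda \in \sigma(\overline{W})\}$ together with the Gresgorin localization from Lemma~\ref{lem_gresgorin} and the spectral facts about SC Laplacians from Lemma~\ref{lem_laplacian}. The right-half vs.\ left-half plane picture for $\overline{W}$ translates, under the map $\lambda \mapsto 1 + \eta \lambda$, into a picture inside the closed unit disk for $P_w$; essentially the whole lemma is a bookkeeping exercise on that map.

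First, I would verify the row-stochastic and non-negative claim by direct entrywise inspection. The off-diagonal entries satisfy $(P_w)_{ij} = \eta w_{ij} \geq 0$, while the diagonal entries equal $1 - \eta \sum_{j\neq i} w_{ij}$, which is positive because $\sum_{j\neq i} w_{ij} \leq \overline{w}_{\max}$ and the hypothesis $\eta < 1/\overline{w}_{\max}$. Row sums are $\sum_j (P_w)_{ij} = 1 + \eta \sum_j \overline{w}_{ij} = 1$, using $\overline{W}\mathbf{1}_n = \mathbf{0}_n$. Strong connectivity under Assumption~\ref{ass_Wg} and Lemma~\ref{lem_laplacian} give that $0$ is a simple eigenvalue of $\overline{W}$, and the affine map then makes $1$ a simple eigenvalue of $P_w$ with right eigenvector $\mathbf{1}_n$.

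For the second bullet, I would apply Lemma~\ref{lem_gresgorin}: every $\lambda \in \sigma(\overline{W})$ satisfies $|\lambda + \overline{w}_{\max}| \leq \overline{w}_{\max}$. Writing $\mu = 1 + \eta \lambda$ and multiplying the Gresgorin inequality by $\eta$ yields
\[
|\mu - (1 - \eta \overline{w}_{\max})| \;\leq\; \eta \overline{w}_{\max},
\]
so $\sigma(P_w)$ is confined to a closed disk that is inscribed in the closed unit disk and internally tangent to the unit circle at $\mu = 1$. The strict bound $\eta \overline{w}_{\max} < 1$ ensures this disk does not touch the unit circle at any other point (in particular it does not reach $-1$), so any unit-modulus eigenvalue of $P_w$ must equal $1$, and that eigenvalue is simple by the previous paragraph; every other eigenvalue lies strictly inside the unit disk.

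Primitivity is then the cleanest step: $P_w$ is non-negative, irreducible (its off-diagonal support matches the strongly connected digraph $\mc{G}$), and by the previous bullet has $\mu = 1$ as its unique eigenvalue of maximum modulus, hence primitive by Perron-Frobenius; equivalently, since $(P_w)_{ii} > 0$ for all $i$, the irreducible non-negative matrix $P_w$ is aperiodic and thus primitive. I do not anticipate a genuine obstacle in the argument; the only place requiring a bit of care is the strictness in the bound on $\eta$, which is precisely what prevents the Gresgorin disk from producing a second eigenvalue on the unit circle (e.g.\ at $-1$) and is therefore essential both to the second and third bullets.
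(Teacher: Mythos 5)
The paper does not actually prove Lemma~\ref{lem_olfati}: it is imported verbatim from the cited reference \cite{Olfati_proceedings} and used as a black box, so there is no in-paper argument to compare against. Your proof is correct and self-contained, and it is the natural one given the tools the paper already has on the table: non-negativity and row-stochasticity by entrywise inspection using $\overline{W}\mb{1}_n=\mb{0}_n$ and $\eta<1/\overline{w}_{\max}$; simplicity of the eigenvalue $1$ from the simple zero eigenvalue of the SC Laplacian (Lemma~\ref{lem_laplacian}); localization of the remaining spectrum via the affine image of the Gresgorin disk of Lemma~\ref{lem_gresgorin}, which under $\lambda\mapsto 1+\eta\lambda$ becomes the disk $|\mu-(1-\eta\overline{w}_{\max})|\leq\eta\overline{w}_{\max}$, tangent to the unit circle only at $\mu=1$ precisely because $\eta\overline{w}_{\max}<1$; and primitivity from irreducibility (strong connectivity of $\mc{G}$) plus positive diagonal, or equivalently from the uniqueness of the modulus-$1$ eigenvalue. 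Two cosmetic points worth noting: the paper's definition of $\overline{w}_{\max}$ leaves the index $i$ free, and your reading of it as $\max_i\sum_{j\neq i}|\overline{w}_{ij}|$ is the one needed for the diagonal entries of $P_w$ to be positive for every row; and the second bullet as literally stated is only true of the eigenvalues other than the simple one at $1$, a distinction your argument correctly makes explicit.
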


One can state Lemma~\ref{lem_olfati}
similarly for $\overline{A}$. We use this for the stability analysis of the DT version~\eqref{eq_xydot_d}  via a similar procedure as in Section~\ref{sec_conv} by proving that~$M_d$ has~$m$ eigenvalues at~$1$ with all other eigenvalues within the unit circle. Assuming~${M_d = (I+\eta M_0)+\alpha \eta M_1}$, it is clear that for~$\eta$ satisfying~Lemma~\ref{lem_olfati} all the eigenvalues of~$I+\eta M_0$ remain inside the unit circle except for~$2m$ eigenvalues~$\lambda_{1,j}=\lambda_{2,j} =1$ (from Lemma~\ref{lem_olfati}). In this case, we consider~$\eta \alpha M_1$ as a perturbation to~$I + \eta M_0$ and we follow a similar analysis as in the proof of Theorem~\ref{thm_zeroeig}. Note that the eigenvectors associated with the $1$ eigenvalues are the same as \eqref{eq_V}-\eqref{eq_U}. Then, from Lemma~\ref{lem_dM} and following similar equation to derive Eq.~\eqref{eq_sum_df}, we have
$\frac{d\lambda_{1,j}}{d\alpha}|_{\alpha=0} = 0$ and~$\frac{d\lambda_{2,j}}{d\alpha}|_{\alpha=0}<0$. This implies that~$M_d$ has~$m$ eigenvalue at~$1$ (with the same right eigenvectors $V_1$) and for sufficiently small~$\alpha \eta$ all other eigenvalues remain inside the unit circle. To bound~$\alpha \eta$, we follow a similar procedure as in Section~\ref{sec_bound}.
From Lemma~\ref{lem_dbound}, we need $d(\sigma(M_d),\sigma(I+\eta M_0))<1- \lambda_{\max} $ where~$\lambda_{\max} =\max_{1\leq j\leq m, 2\leq i\leq n }|\lambda_{i,j}|$ as the eigenvalue (with its maximum absolute value less than $1$) of~$I + \eta M_0$, and the bound on~$\eta \alpha$ can be determined.
From Lemma~\ref{lem_gresgorin}, for $0$-$1$ adjacency matrix $W$, $\overline{w}_{\max}$ denotes the max node degree \cite{SensNets:Olfati04}, and the algebraic connectivity of $\overline{W}$ satisfies $\lambda_2(\overline{W}) \geq \frac{1}{nd_g}$ with $d_g$ as the network diameter \cite[p.571]{graph_handbook}. This gives an idea of the eigen-spectrum of $M_0$ and the discretized matrix $I+\eta M_0$. Using perturbation analysis, one can find an estimate on the eigen-spectrum of $M_d(\alpha)$ by considering the perturbation term as $\eta \alpha M_1$. Then, Eq.~\eqref{eq_alpha_spec_norm} changes to the following which gives one admissible range of $\alpha \eta$,
\begin{align} \label{eq_alpha_eta_spec}
0< \alpha \eta \leq \frac{ (1-\lambda_{\max})^{nm}}{4^{nm}(3 -\lambda_{\max})^{nm-1}   \max\{1,\gamma\}}
\end{align}
Similar to the derivation resulting to Eq.~\eqref{eq_alphabar}, one can find the min $\overline{\alpha}$ towards instability for $|\lambda| \neq 1$ as
\begin{align} \nonumber
    \overline{\alpha} &= \argmin_{\alpha} |1 - \sqrt{\frac{\alpha \eta H}{|\lambda - 1|}}| \\
     &\geq \frac{1-\lambda_{\max}}{\eta \max \{H_{ii}\}} = \frac{1-\lambda_{\max}}{\eta \gamma}
\end{align}
and the bound in Eq.~\eqref{eq_alphabar} changes to,
\begin{align} \label{eq_alphabar_eta}
    0 < \alpha \eta < \frac{\min \{1 - \lambda_{\max}(\overline{A}) , 1 - \lambda_{\max}(\overline{W}) \}}{\gamma}
\end{align}
where the bound is on both $\alpha \eta$ as the perturbation parameter.
For undirected networks, i.e., symmetric $\overline{W}$, $\overline{A}$ with real eigenvalues and eigenvectors, for simplicity assume $m=1$ where the results can be easily extended to $m>1$. Three cases may occur for $\lambda_{\max}(\overline{W}), \lambda_{\max}(\overline{A})$ which are largest (in absolute value) eigenvalues inside the unit circle: (i) $|\lambda_{\max}(\overline{W})| > |\lambda_{\max}(\overline{A})|$ with vector $[\mb{v}_w; \mb{0}_n]$ as the right eigenvector of $M_0$ associated to it. (ii) $\lambda_{\max}(\overline{W}) < \lambda_{\max}(\overline{A})$ with  $[\mb{0}_n; \mb{v}_a]$ as its right eigenvector. (iii) $\lambda_{\max}(\overline{W}) = \lambda_{\max}(\overline{A})$ (algebraic multiplicity of two) with $[\mb{v}_w, \mb{0}_n; \mb{0}_n, \mb{v}_a]$ as the right eigenvectors. Then, from Lemma~\ref{lem_dM} and following similar equation to derive Eq.~\eqref{eq_sum_df}, for case (i),
\begin{align} \label{eq_dmalpha1}
    [ \mb{v}_w; \mb{0}_n]^\top M_1 [ \mb{v}_w; \mb{0}_n]= 0.
\end{align}
which implies that, after perturbation, we have $ \partial_{\alpha} \lambda_{\max}|_{\alpha=0} = 0$. For case (ii),
\begin{align} \label{eq_dmalpha2}
    [ \mb{0}_n; \mb{v}_a]^\top M_1 [ \mb{0}_n; \mb{v}_a]= -\sum_{i=1}^n \mb{v}^2_{a,i} \partial_{\mb{x}_i} f_i < 0.
\end{align}
which implies that perturbation leads to $ \partial_{\alpha}\lambda_{\max}|_{\alpha=0} < 0$ and $\lambda_{\max}<1$ moves further inside the unit circle and gets smaller in absolute value. For case (iii) as a combination of the other two cases,
\begin{align} \label{eq_dmalpha3}
    \left(\begin{array}{cc}
    \mb{v}_w & \mb{0}_n  \\
    \mb{0}_n	&  \mb{v}_a
    \end{array} \right)^\top M_1
    \left(\begin{array}{cc}
    \mb{v}_w & \mb{0}_n  \\
    \mb{0}_n	&  \mb{v}_a
    \end{array} \right) = \left(\begin{array}{cc}
    0	& \times  \\
    0	& -\mb{v}_a^\top H \mb{v}_a
    \end{array} \right).
\end{align}
saying that $ \partial_{\alpha}\lambda_{\max}(\overline{W})|_{\alpha=0} = 0$, $ \partial_{\alpha}\lambda_{\max}(\overline{A})|_{\alpha=0} < 0$. This implies that $\lambda_{\max}(\overline{A})$ gets smaller in size, and moves further towards inside the unit circle and $\lambda_{\max}(\overline{W})<1$ remains constant. Since the perturbation parameter is $\alpha \eta$ in Eq.~\eqref{eq_alphabar_eta}, $\alpha$ needs to be decreased for larger values of the sampling period $\eta$. Following Lemma~\ref{lem_dbound}, one can claim that $|\lambda_{\max}(\alpha)|  \geq |\lambda_{\max}| + d(\sigma(M(\alpha)),\sigma(M_0))$ with $|\lambda_{\max}| := \max_{2\leq i \leq 2n,1\leq j\leq m} |\lambda_{i,j}|$ (for $\alpha = 0 $). For a given $0 <\alpha < \overline{\alpha}$, then Eq.~\eqref{eq_Tbound} gives another bound as,

\scriptsize \begin{align} \label{eq_Tbound_d}
	\eta < \frac{1}{  \max\{\overline{w}_{\max}, \overline{a}_{\max}\} +2(\lVert M_0\rVert+\lVert M(\alpha)\rVert)^{1-\frac{1}{2mn}} \lVert \alpha M_1\rVert^{\frac{1}{2mn}} },
\end{align} \normalsize
ensures stable system mapping from CT to DT and convergence under the discretized model \eqref{eq_xydot_d}. We summarized our proposed distributed optimization solution in Algorithm~\ref{alg_1}.

\begin{algorithm} [t] \label{alg_1}
		\textbf{Given:} costs $f_i(\mb{x}_i)$, dynamic graph $\mc{G}_q$, weight matrices $W_q,A_q$, switching signal $q$  \\	
		\textbf{Initialization:} ${\mb{y}}_i(0)=\mb{0}_{m}$, initialize ${\mb{x}}_i(0)$  with random values \;
		Choose $\eta$ satisfying Eq.~\eqref{eq_Tbound_d} and $0<\alpha<\overline{\alpha}$ satisfying Eq.~\eqref{eq_alphabar}
		\\
		\For{$k<T_{termination}$}{
			Node $i$ receives $\mb{x}_j(k)$ and $\mb{y}_j(k)$ data from the neighbors $j$\;
			Node $i$ finds $\mb{x}_i(k+1)$ via discrete dynamics \eqref{eq_xdot_d}\;
			Node $i$ finds $\boldsymbol{ \nabla} f_i(\mb{x}_i(k+1))$\;
			Node $i$ finds  $\mb{y}_i(k+1)$ via discrete dynamics \eqref{eq_ydot_d}\;
			Node $i$  shares $\mb{x}_i(k+1)$ and $\mb{y}_i(k+1)$ over $\mc{G}_q$\;
			Set $k \leftarrow k+1$
			}
		\textbf{Return:} final optimal state $\mb{x}^*$ and final cost $F(\mb{x}^*)$\;	
		\caption{The proposed distributed optimization algorithm. }
\end{algorithm}

\subsection{Performance Subject to Link Removal}
The proposed discrete strategy shows better performance over dynamic networks and switching topologies. This is because the WB condition for matrices $W$ and $A$ (Assumption~\ref{ass_Wg}) is much easier to satisfy as compared to weight-stochastic design algorithms, e.g., in existing discrete-time literature \cite{khan_AB,add-opt,pushpull_nedic}. Note that many existing works do not work under switching network topologies or need weight-redesign algorithms to satisfy stochastic conditions \cite{6426252,cons_drop_siam} in case of link failure. Since in general weigh-balancing algorithms \cite{2014:ISCCSP2} are computationally more efficient than stochastic design algorithms and in some cases, it is not even possible to satisfy the weight stochasticity for some networks (see example in Fig.~\ref{fig_remov}-(Right)), the proposed strategy outperforms the existing literature. This specifies a case where there is no need to redesign the weights for the WB condition (for weight-symmetric undirected networks) but needs the weights to be redesigned (if possible) for the stochastic condition. Some other examples in distributed resource allocation and coupling-constrained distributed optimization subject to packet drops and link removals are given in \cite{icrom}. In \cite{icrom} it is claimed that such a WB condition is more preferred in real-time applications as it requires no need to redesign stochastic algorithms in case the network is unreliable and dynamic, e.g., in packet loss scenarios.

\begin{rem}
	For general \textit{symmetric WB undirected graphs} with unreliable links, the WB conditions hold after link removal/failure, but the bi-stochastic condition does not necessarily hold for the same networks after link removal. For example, consider the illustration in Fig.~\ref{fig_remov}.
\end{rem}

\begin{figure}
	\centering
	\includegraphics[width=1.5in]{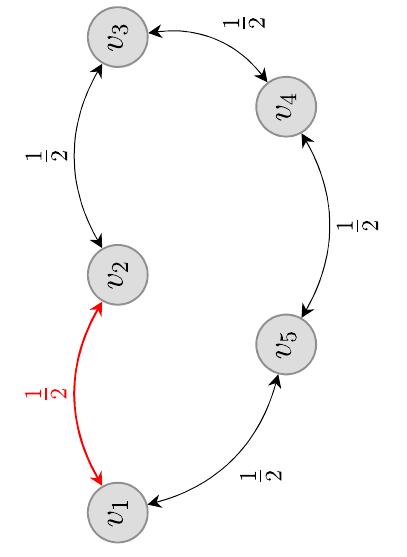}
	\includegraphics[width=1.5in]{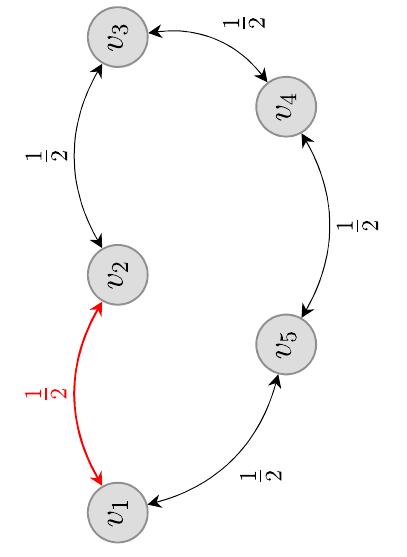}
	\caption{(Left)-An undirected unreliable network of $5$ nodes which is both stochastic and WB. The red link represents an unreliable link that might be subject to failure.  (Right)-The same network after removal/failure of the (bi-directional) unreliable link: the network is still WB, but is not bi-stochastic anymore. To apply the existing weight-stochastic literature, e.g., \cite{khan_AB,add-opt,pushpull_nedic}, one needs to redesign the weights, e.g., by weight compensation algorithms in \cite{6426252,cons_drop_siam}, to redesign the weights if possible and make the new topology weight-stochastic again.  } \label{fig_remov}
\end{figure}

\section{Simulation: Nonlinear SVM}
\begin{figure}
	\centering
	\includegraphics[width=1.6in]{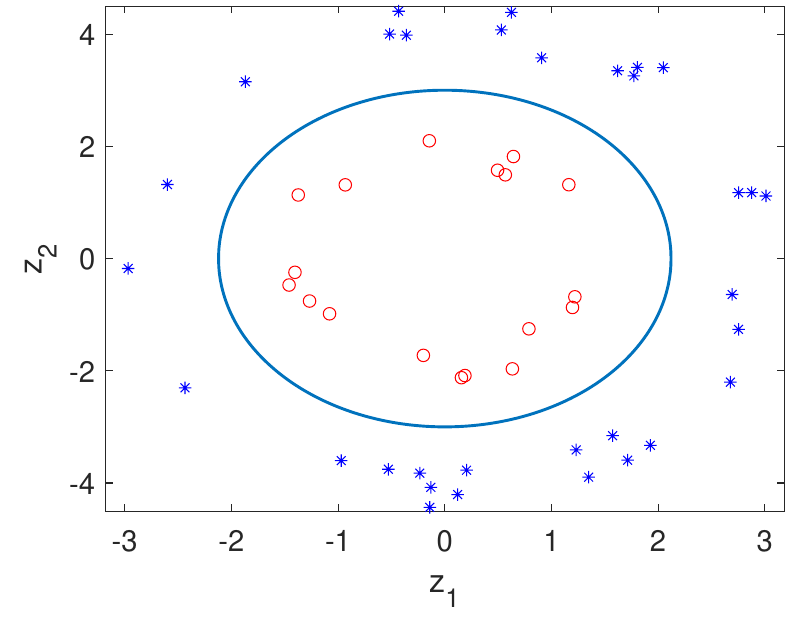}
	\includegraphics[width=1.65in]{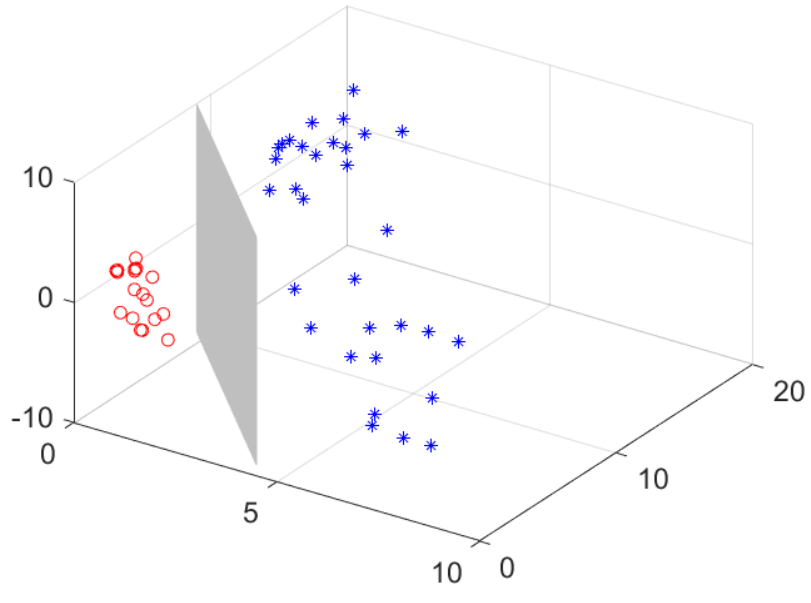}
	\caption{This figure shows the data points scattered in 2D  and classified by the SVM hyperplane in 3D. } \label{fig_graph}
\end{figure}
\begin{figure*} [bpt]
	\centering
	\includegraphics[width=2.2in]{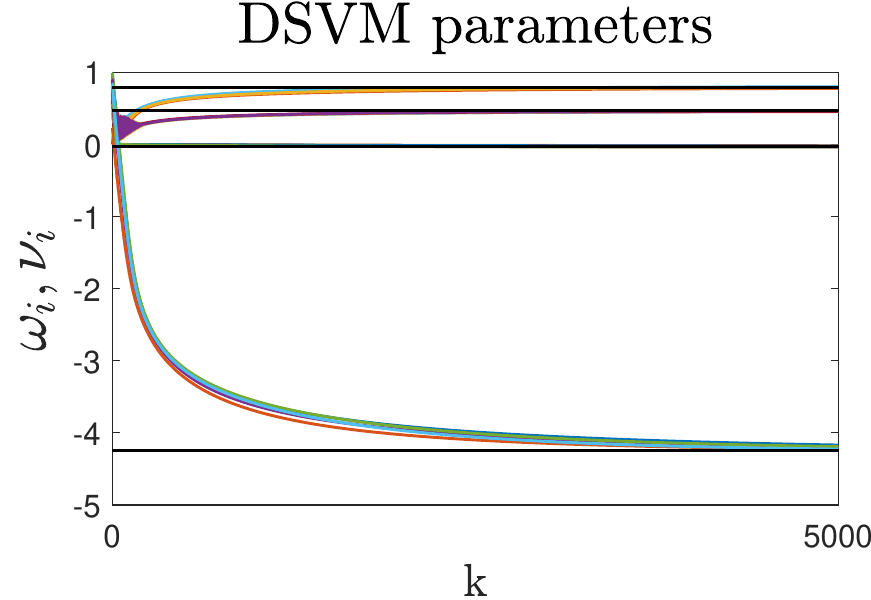}
	\includegraphics[width=2.3in]{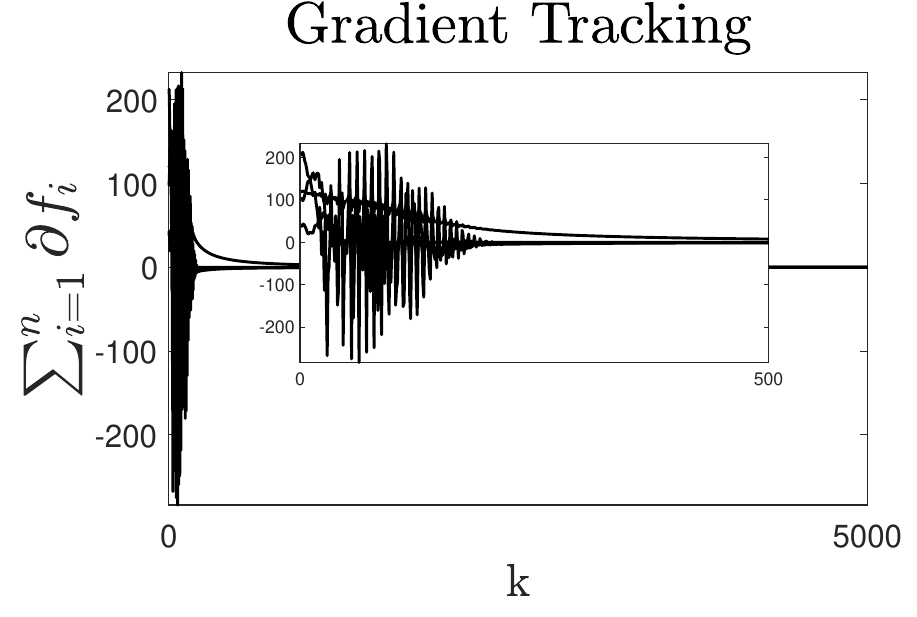}
	\includegraphics[width=2.3in]{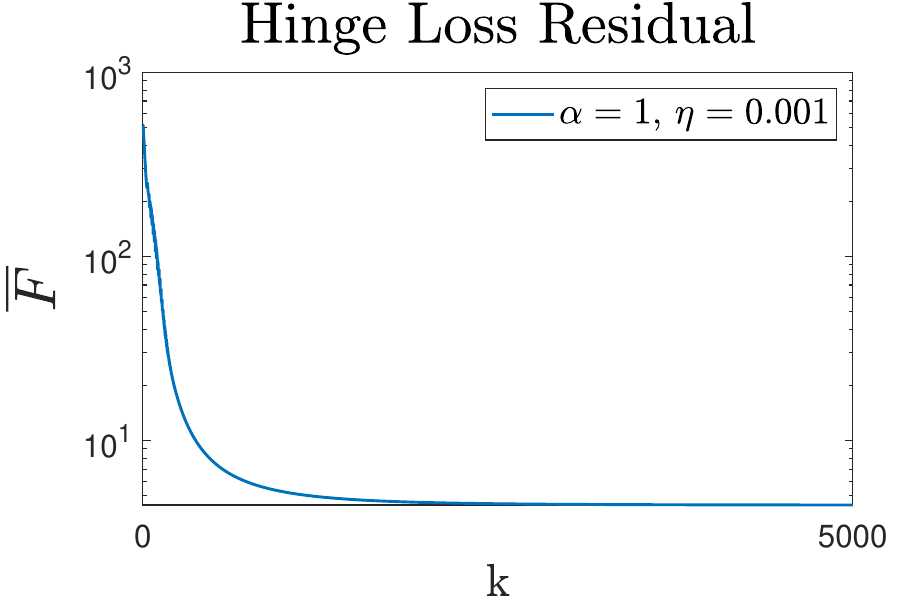}
	\includegraphics[width=2.2in]{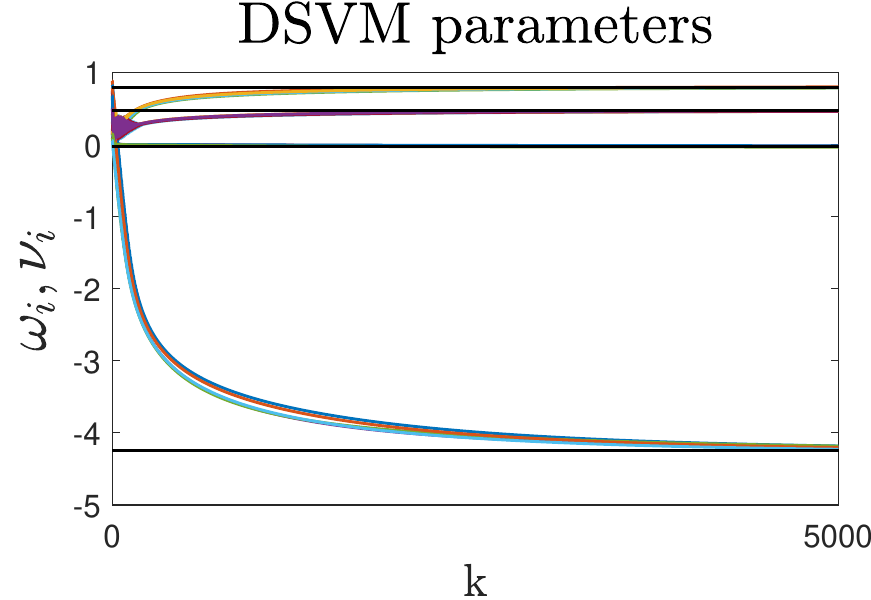}
	\includegraphics[width=2.3in]{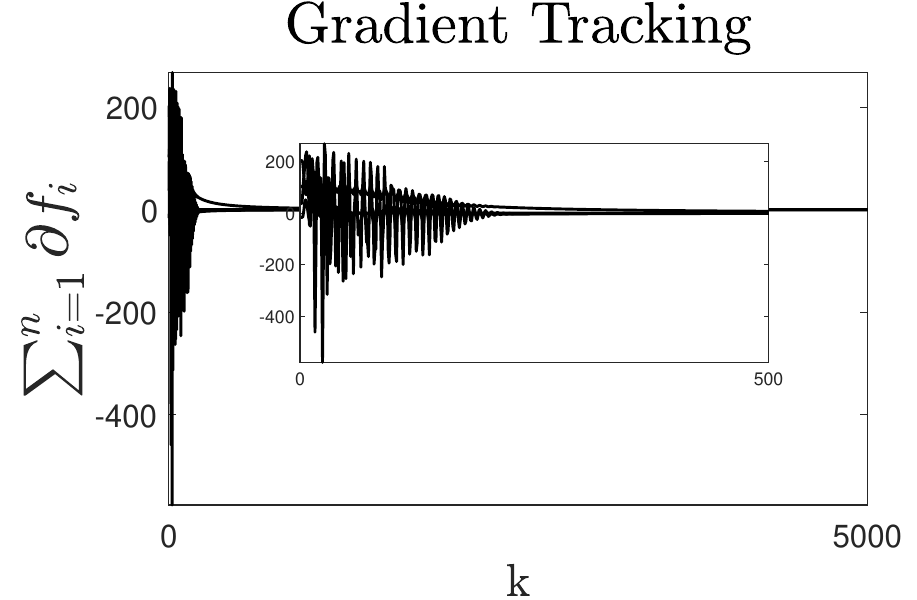}
	\includegraphics[width=2.3in]{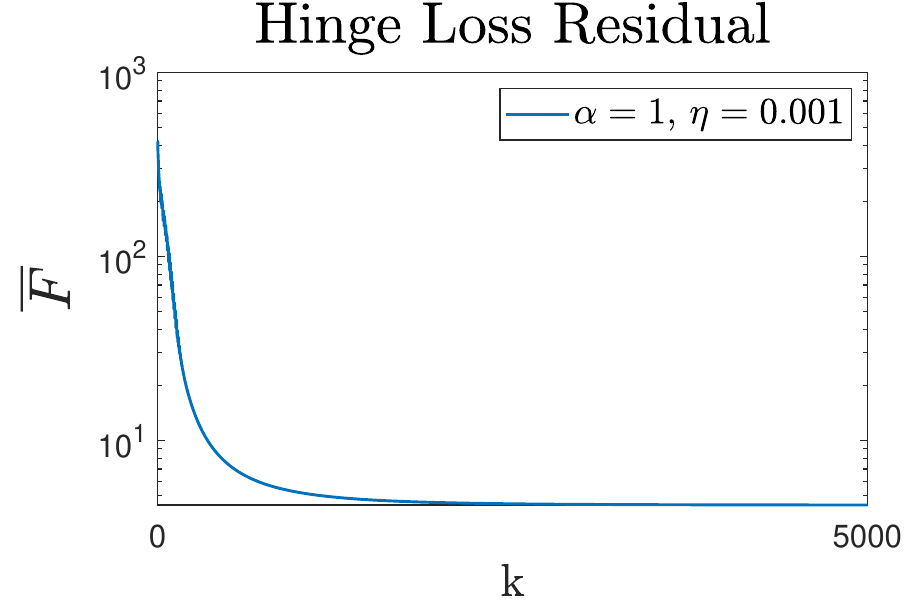}	
	\caption{The SVM parameters~${\omega}_i$ and~$\nu_i$ (at all $n=5$ nodes in Fig.~\ref{fig_remov}) under discrete dynamics~\eqref{eq_xydot}-\eqref{eq_M}. The top row represents the parameters before link removal and the bottom row is for after link removal. The gradient sum~$\sum_{i=1}^{n} \boldsymbol{ \nabla} f_i(\mb{x}_i)$ is also shown to admit the GT property.  The hinge loss residual~$\overline{F}(\mb{x}) = \sum_{i=1}^n f_i(\mb{x}_i) - F^*$ (with $F^*$ obtained via centralized SVM) is also shown. } \label{fig_dynamics_remov}
\end{figure*}

Here, we recall an application in distributed support-vector-machines (D-SVM) from \cite{dsvm}. The cost function to be optimized locally is in logarithmic hinge loss form as
\begin{align}\label{eq_svm_smooth}
&f_i(\mb{x}_i)=\boldsymbol{\omega}_i^\top \boldsymbol{\omega}_i + C \sum_{j=1}^{N_i} \tfrac{1}{\mu}\log (1+\exp(\mu z_i)),\\
&{\mb{x}_i = [\boldsymbol{\omega}_i^\top;\nu_i]}\in\mathbb{R}^m,~~{z_i=1-l_j( \boldsymbol{\omega}_i^\top \phi(\boldsymbol{\chi}^i_j)-\nu_i)}
\end{align}
with $\phi(\cdot)$ as some proper nonlinear mapping. The simulation are done for the 3D binary classification example with $m=4$ and $50$ random data points. The loss function parameters are set as: $\mu=3$, $C=1.5$. The points are not linearly separable in 2D and transformed into 3D using proper nonlinear mapping in the form $\phi(\boldsymbol{\chi}^i_j)=[\boldsymbol{\chi}^i_j(1)^2;\boldsymbol{\chi}^i_j(2)^2;\sqrt{2}\boldsymbol{\chi}^i_j(1)\boldsymbol{\chi}^i_j(2)]$. The projected 3D points are linearly separable via SVM hyperplane and proper Kernel function. It should be clarified that every node has access to a different part of the SVM classification dataset. In fact, only $80\%$ of randomly chosen data points are assigned to each node.
The SC graph of $n=5$ nodes is shown in Fig.~\ref{fig_remov} (before and after link removal). For the sake of this simulation, the link weights are chosen randomly and \textit{symmetric}, and one of the links is removed to show the WB performance subject to link failure. In Fig.~\ref{fig_dynamics_remov}, the top row shows the algorithm evolution before link failure and the bottom row is for after link removal. We set $\alpha=1,\eta=0.001$ in Algorithm~\ref{alg_1} for this simulation and choose random initialization for D-SVM parameters. As it is clear, the proposed algorithm is still convergent under link failure. This shows how our work advances the state-of-the-art literature which 
assume stochastic link-weights. Note that stochastic weight design suffers from change in the network topology (including link failure), since redesign algorithms are needed to compensate for loss of stochasticity. Some of these compensation algorithms are proposed in \cite{6426252,cons_drop_siam,xu2018consensusability,li2019robust,gerencser2018push}. These algorithms add more computational complexity to the existing bi-stochastic weight design solutions.  Therefore, such algorithms cannot handle the link failure example in Fig.~\ref{fig_remov}, while our algorithm works with no additional computational complexity. Note that in Fig.~\ref{fig_dynamics_remov} the D-SVM parameters at all nodes reach consensus and converge to the centralized SVM parameters asymptotically, while each node has only access to a portion of the classification dataset. This is significant as it illustrates how our algorithm paves the way for \textit{distributed} and \textit{localized} machine learning solutions. 

\begin{figure} [bpt]
	\centering
		\includegraphics[width=3in]{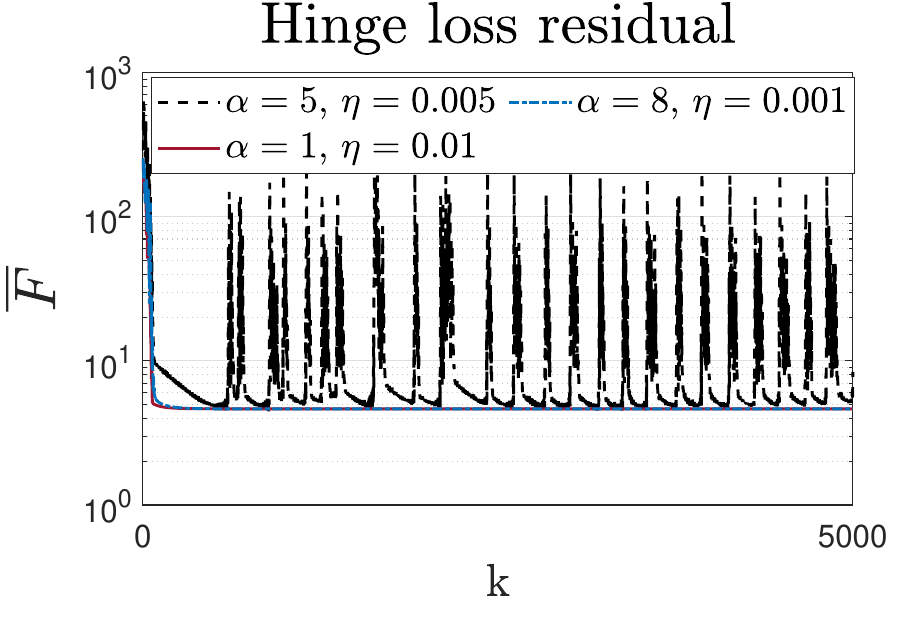}
	\caption{The D-SVM residual (at all $n=6$ nodes) under discrete dynamics~\eqref{eq_xydot}-\eqref{eq_M}, where the weight matrices are time-varying (random-weighting at every 100 iterations) over a 2-hop SC digraph. The hinge loss residual~$\overline{F}$ are compared for different $\alpha,\eta$ values. This example shows that for large values of both $\alpha=5,\eta=0.005$ solution is not stable.} \label{fig_dynamics}
\end{figure}

We repeat the simulation for a dynamic random network with time-varying link weights randomly changing every~$100$ iterations. This is to show that our setup can handle dynamic network topologies. The residuals $\overline{F}(\mb{x}) = \sum_{i=1}^n f_i(\mb{x}_i) - F^*$ (with $F^*$ obtained via centralized SVM)  are shown in Fig.~\ref{fig_dynamics} for different values of $\alpha,\eta$ in Algorithm~\ref{alg_1}: $\alpha=5,\eta=0.005$, $\alpha=8,\eta=0.001$, $\alpha=1,\eta=0.01$. As it can be seen there is a trade-off between these two variables, and by increasing one the other is decreased to satisfy the stability. For large values of both GT tracking parameter $\alpha$ and discrete step-size $\eta$ the stable convergence may not be achieved, e.g., see $\alpha=5$ and $\eta =0.005$. This is because (following the perturbation-based analysis in Section~\ref{sec_discrete}) for these values of $\alpha$ and $\eta$ one or more eigenvalues of the proposed dynamics~\eqref{eq_xydot_d} move to the RHP and causes instability. On the other hand, for other choices of $\alpha$ and $\eta$ satisfying Eq.~\eqref{eq_alphabar_eta} all eigenvalues remain in the LHP and stable convergence holds.

\section{Conclusions and Future Directions}
This work relaxes the weight constraint on the consensus optimization algorithms, e.g., to be applied over dynamic networks. One main application is in distributed mobile sensor networks and learning over coordinated (swarm) robotic systems, where the connectivity mainly follows the nearest neighbour rule \cite{jadbabaie2003coordination} or (limited) broadcasting range of the mobile entities (or agents) with the links coming and going as their formation evolves. As one direction of future research, one can further address possible communication time-delays or link failure  (similar to the equality-constraint optimization in \cite{scl,9663474}). Also, actuation and data-transmission nonlinearities on the node dynamics, e.g., in terms of quantization or saturation for equality-constraint optimization  \cite{doostmohammadian20211st,rikos2021optimal}, can be extended to this current work.

\section*{Acknowledgements}
The authors would like to thank Themistoklis Charalambous and Usman A. Khan for their help and comments.

\bibliographystyle{IEEEbib}
\bibliography{bibliography}

\begin{IEEEbiography}[{\includegraphics[width=1.1in,clip,keepaspectratio]{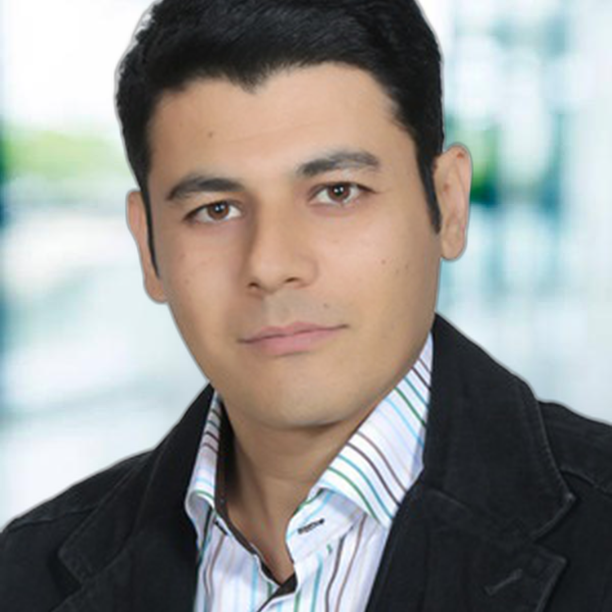}}]{Mohammadreza~Doostmohammadian}
	received his B.Sc. and M.Sc. in Mechanical Engineering from Sharif University of Technology, Iran, respectively in 2007 and 2010, where he worked on applications of control systems and robotics. He received his PhD in Electrical and Computer Engineering from Tufts University, MA, USA in 2015. During his PhD in Signal Processing and Robotic Networks (SPARTN) lab, he worked on control and signal processing over networks with applications in social networks. From 2015 to 2017 he was a postdoc researcher at ICT Innovation Center for Advanced Information and Communication Technology (AICT), School of Computer Engineering, Sharif University of Technology, with research on network epidemic, distributed algorithms, and complexity analysis of distributed estimation methods. He was a researcher at Iran Telecommunication Research Center (ITRC), Tehran, Iran in 2017 working on distributed control algorithms and estimation over IoT. Since 2017 he has been an Assistant Professor with the Mechatronics Department at Semnan University,  Iran, and he was a researcher at the School of Electrical Engineering and Automation, Aalto University, Finland. His general research interests include distributed estimation, control, and convex optimization over networks. He was the chair of the robotics and control session at the ISME-2018 conference and also the session chair at 1st Artificial Intelligent Systems Conference of Iran, 2022.
\end{IEEEbiography}

\begin{IEEEbiography}[{\includegraphics[width=1.1in]{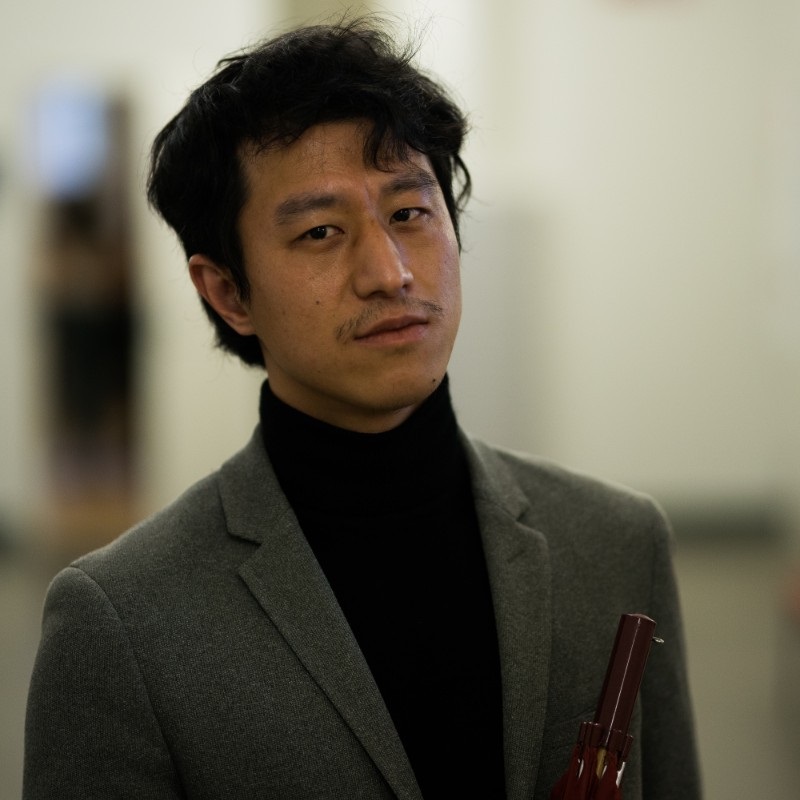}}]{Wei Jiang}
	got his Ph. D. degree from Ecole Centrale de Lille, France, working on cooperative control for multi-agent/robot systems and motion planning for mobile manipulator systems. He is a Postdoctoral researcher at the School of Electrical Engineering and Automation, Aalto University, Finland.
\end{IEEEbiography}

\begin{IEEEbiography}[{\includegraphics[width=1.1in]{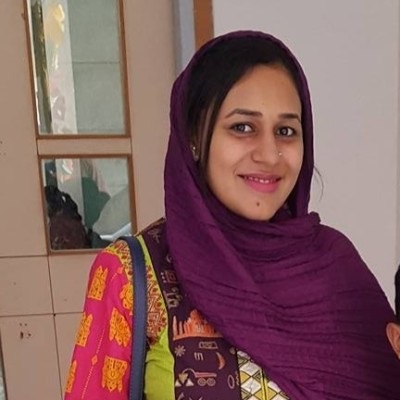}}]{Muwahida Liaquat}
	received her PhD in Electrical Engineering in 2013 from the National University of Sciences and Technology (NUST), Pakistan, specializing in Control Systems. Her work was focused on developing a reconstruction filter for the sampled data regulation of linear systems. She received her MS in Electrical Engineering in 2006, and her BE in Computer Engineering in 2004 from NUST. She is currently working as an Assistant Professor in the Department of Electrical Engineering at NUST College of Electrical and Mechanical Engineering, where she is part of the Control Systems research group along with teaching specialized courses in control engineering to both undergraduate and postgraduate levels. She has supervised a number of MS students in the area of control engineering. She was a Postdoctoral researcher at the School of Electrical Engineering and Automation, Aalto University, Finland. She is currently 
	a senior research scientist with Finnish Geospatial Research Institute (FGI).
\end{IEEEbiography}

\begin{IEEEbiography}[{\includegraphics[width=1.1in]{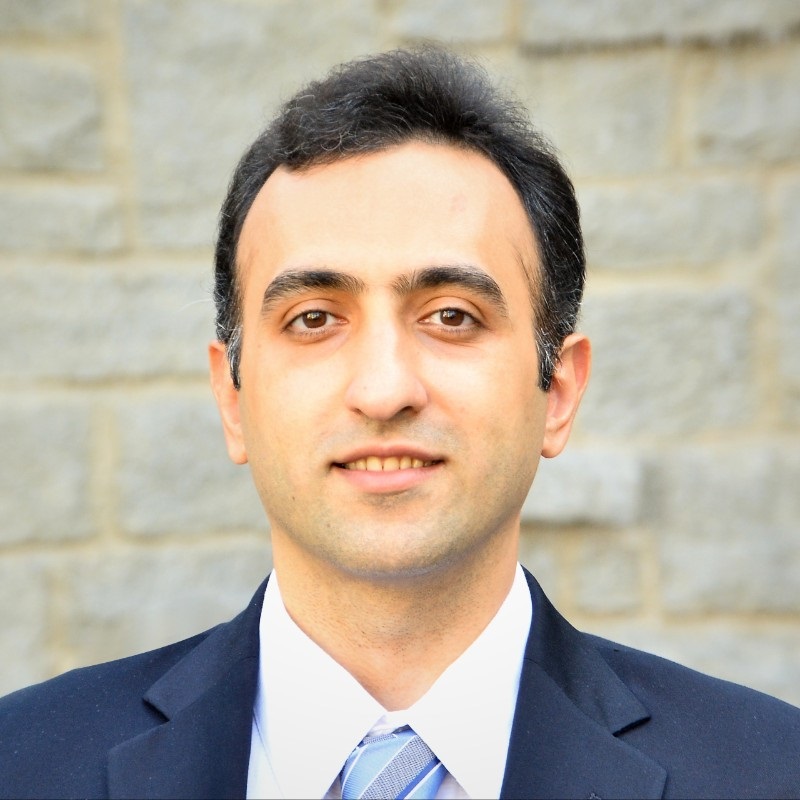}}]{ Alireza Aghasi}
	joined the School of Electrical Engineering and Computer Science at Oregon State University in Fall 2022. Between 2017 and 2022, he was an assistant professor in the Department of Data Science and Analytics at the Robinson College of Business, Georgia State University. Prior to this position, he was a research scientist with the Department of Mathematical Sciences, IBM T.J. Watson Research Centre, Yorktown Heights. From 2015 to 2016 he was a postdoctoral associate with the computational imaging group at the Massachusetts Institute of Technology, and between 2012 and 2015 he served as a postdoctoral research scientist with the compressed sensing group at Georgia Tech. He got his PhD from Tufts University in 2012. His research fundamentally focuses on optimization theory and statistics, with applications to various areas of data science, artificial intelligence, modern signal processing and physics-based inverse problems.
\end{IEEEbiography}

\begin{IEEEbiography}[{\includegraphics[width=1.1in]{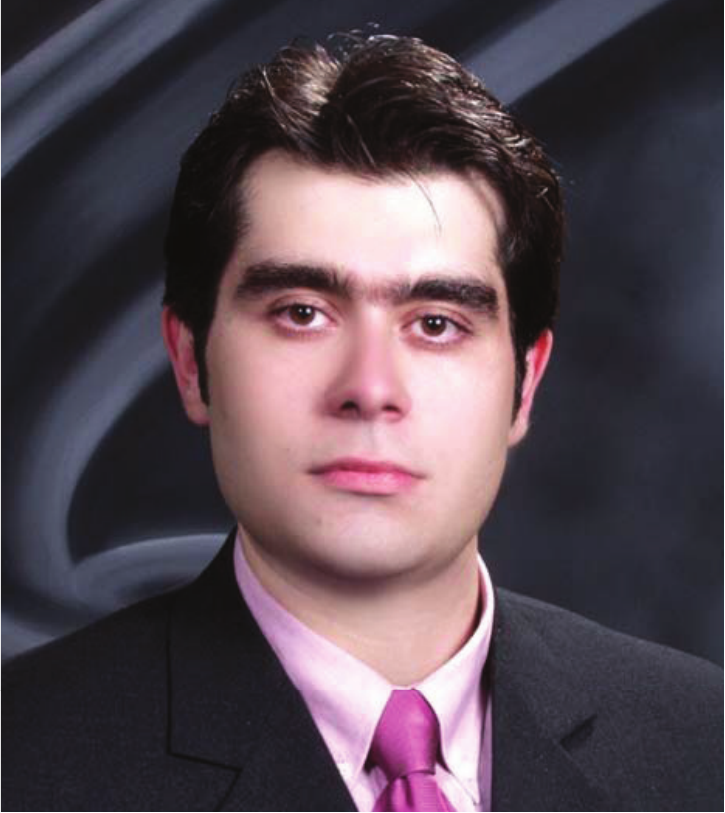}}]{Houman Zarrabi}
	received his PhD from Concordia University in Montreal, Canada in 2011. Since then he has been involved in various industrial and research projects. His main expertise includes IoT, M2M, CPS, big data, embedded systems, and VLSI. He is the national IoT program director and Assistant Professor at the Iran Telecommunication Research Center (ITRC).
\end{IEEEbiography}
\end{document}